\theoremstyle{plain}
\newtheorem{theorem}{Theorem}[section]
\newtheorem{notation}[theorem]{Notation}
\newcommand{\norm}[1]{\left\Vert#1\right\Vert}
\newcommand{\brac}[1]{\left(#1\right)}
\newcommand{\abs}[1]{\left\vert#1\right\vert}
\newcommand{\diag}{\mbox{diag}}
\newcommand{\diff}{\mbox{d}}
\begin{document}

\baselineskip=1pc  

\begin{center}
{\bf \large A thermodynamics-based turbulence model for isothermal compressible flows}
\end{center}

\vspace{.2in}

\centerline{
Zhiting Ma \footnote{Beijing Institute of Mathematical Sciences and Applications, Beijing 101408, China. E-mail: mazt@bimsa.cn.}
\qquad
Wen-An Yong 
\footnotemark[1]\textsuperscript{,}
\footnote{Department of Mathematical Sciences, Tsinghua University, Beijing 100084, China. E-mail: wayong@tsinghua.edu.cn.}\textsuperscript{,}
{\renewcommand{\thefootnote}{\fnsymbol{footnote}}
\footnote{Corresponding author.}
}
\qquad
Yi Zhu 
\footnotemark[1]\textsuperscript{,}
\footnote[3]{Yau Mathematical Sciences Center, Tsinghua University, Beijing 100084, China.
E-mail: yizhu@tsinghua.edu.cn.}
}

\vspace{.4in}

\centerline{\bf Abstract}

\vspace{.2in}

This study presents a new turbulence model for isothermal compressible flows. The model is derived by combining the Favre averaging and the Conservation-dissipation formalism --- a newly developed thermodynamics theory. The latter provides a systematic methodology to construct closure relations that intrinsically satisfy the first and second laws of thermodynamics. The new model is a hyperbolic system of first-order partial differential equations. It has a number of numerical advantages, and addresses some drawbacks of classical turbulence models by resolving the non-physical infinite information propagation paradox of the parabolic-type models and accurately capturing the interaction between compressibility and turbulence dissipation. Furthermore, we show the compatibility of the proposed model with Prandtl's one-equation model for incompressible flows by deliberately rescaling the model and studying its low Mach number limit.

\vfill

\noindent {\bf Keywords}: {Turbulence modeling, Thermodynamics, Conservation-dissipation formalism, Low Mach number limit.}

\vspace{.2in}
%

\noindent {\bf AMS Subject Classification}: {76-10, 76F02, 76F50, 35B25}


\vspace{.2in}

\begin{CJK}{UTF8}{gbsn}


\section{Introduction}

Turbulent flows are ubiquitous in nature and almost all flows of practical engineering interest are turbulent. Typical examples include flows around moving objects such as airplanes, ships and automobiles. They are also important parts in a variety of transport phenomena occurring in the atmosphere and ocean. A correct understanding and accurate simulation of turbulence has important practical and scientific significance. Starting from the pioneering works of Reynolds, Prandtl, Kolmogorov, Boussinesq and von K\'arm\'an \cite{Reynolds1976, Prandtl1925, Kolmogorov1941, boussinesq1877, Karman1948}, many theoretical and computational models have been developed, such as Prandtl’s one-equation model \cite{Prandtl1945}, two-equations models \cite{hanjalic1972reynolds,launder1974numerical,saffman1970model,wilcox1988reassessment} and so on \cite{ahmadi1989two,huang1995compressible}. Most of the earlier works are for incompressible fluids \cite{Launder1972, Lumley1978, Rodi1982, Jones1972, Jones1973, Launder1975} and are based on the Reynolds averaging and the Boussinesq eddy-viscosity approximation. These models are quite successful in many applications \cite{launder1972mathematical, Bradshaw1981, deck2002development, menter2003ten}. However, they may give predictions significantly different from measurements \cite{bradshaw1973effects}.
 
The compressibility effects can substantially influence flow behaviors \cite{Jones1980, LibbyWilliams1980, sarkar1991analysis}. The extension of turbulence modeling to compressible flows introduces additional complexities arising from density variations, shock-turbulence interactions, and dilatational dissipation mechanisms \cite{sarkar1991analysis,durbi2018developments}. Notably, the classical Boussinesq hypothesis becomes increasingly problematic at higher Mach numbers due to the breakdown of equilibrium between the turbulent stress and mean strain rate \cite{ahmadi1989two,huang1995compressible}. Furthermore, the coupling between acoustic modes and turbulent kinetic energy transfer creates new closure challenges that are absent in incompressible formulations \cite{Blaisdell1993compressibility}. These inherent limitations motivate the need for a fundamentally new modeling paradigm grounded in thermodynamic consistency.

The goal of this work is to contribute a new perspective to this actively developing field. The core is to make full use of the first and second laws of thermodynamics and propose novel turbulence models for compressible flows. Towards this goal, we start with conservation laws for isothermal flows, take the Reynolds averaging or Favre averaging and obtain new conservation equations with a Reynolds-stress tensor. Then a Boussinesq-like approximation is adapted for the Reynolds stress. More importantly, the resultant conservation laws are closed by utilizing the Conservation-Dissipation Formalism (CDF) \cite{zhu2015conservation,yong2020cdf} -- a recently developed nonequilibrium thermodynamics theory, which provides a systematic methodology to construct closure relations that intrinsically satisfy the first and second laws of thermodynamics. The derived model is a set of first-order partial differential equations, which is hyperbolic and thermodynamically stable. In particular, it guarantees that the turbulent entropy production remains non-negative, thereby eliminating unphysical solutions.


Unlike the traditional turbulence models such as Prandtl’s one-equation model and the $k-\varepsilon$ model \cite{hanjalic1972reynolds,launder1974numerical}, our hyperbolic model does not contain second-order spatial derivatives in the governing equations. This fundamental architectural innovation possesses significant advantages. Physically, it inherently satisfies the causality principle in turbulent flow evolution and resolves the non-physical infinite information propagation paradox of the parabolic-type models.
Numerically, the hyperbolic nature of our first-order model enables efficient discretizations \cite{mcdonald2011extended, mcdonald2014application} via, for instance, the discontinuous Galerkin method \cite{cockburn2000development} and finite-volume methods \cite{leVeque2002Finite}. It also offers geometric flexibility through relaxed mesh quality requirements -- particularly advantageous for complex industrial configurations involving multi-element airfoils or labyrinthine cooling channels.

On the other hand, our new turbulence model is also different from the first-order hyperbolic models recently proposed in \cite{yan2020hyperbolic,yan2023hyperbolic}. Those models were obtained by mathematically relaxing the traditional second-order convection-diffusion turbulence models (including Prandtl’s one-equation model, the $k-\varepsilon$ and $k-\omega$ models), and by replacing the Navier-Stokes equations with the ten-moment equations \cite{levermore1998gaussian}. 
Although the models achieve hyperbolicity through mathematical relaxation techniques (e.g., introducing auxiliary variables for turbulent quantities), their approach lacks a clear thermodynamic basis. 
 In contrast, our CDF-based closure rigorously enforces the entropy inequality through its constitutive structure, ensuring thermodynamic consistency.

To show the reasonableness of our new turbulence model, we deliberately rescale the model to analyse its low Mach number limit. Particularly, it is justified that the new model is compatible with Prandtl's one-equation model for incompressible flows. The compatibility demonstrates that the new hyperbolic model successfully bridges the gap in the theoretical disparities between compressible and incompressible turbulent flow regimes, establishing a unifying paradigm that harmonizes traditionally distinct domains of fluid mechanics research.


The rest of the paper is organized as follows. Section \ref{sec2:rafe} presents the Favre-averaged fluid equations and the Boussinesq-like approximation for the corresponding Reynolds stress. In Section \ref{sec3:CDF} we introduce the Conservation-dissipation formalism and construct a toy model to show how CDF works.  Our new hyperbolic turbulence model is constructed in Section \ref{sec4:cdf-model}. In Section \ref{sec:5-compatibility} we rescale the new turbulence model. The low Mach number limit is justified in Section \ref{sec:6-justification}.

\section{Favre-averaged Fluid Equations}\label{sec2:rafe}

Consider an isothermal compressible fluid. Without external forces, the conservation laws of mass and momentum for such electrically neutral fluids read as
\begin{equation}\label{equ:fluids-equations}
	\begin{aligned}
		&\partial_t \rho  + \nabla\cdot (\rho \bm{u} ) = 0,\\
		&\partial_t (\rho \bm{u}) + \nabla\cdot (\rho \bm{u} \otimes \bm{u}  +  \bm{P})  = 0.
	\end{aligned}
\end{equation}
Here $\rho$ is the fluid density, $\bm{u} \in \mathbb{R}^3$ is the velocity, $\bm{P}$ is the stress tensor which is symmertric, $\nabla$ is the gradient operator with respect to the space variable $\bm{x}=(x_1, x_2, x_3)$, $\otimes$ denotes the tensorial product. $(x, t) \in \Omega \times (0, +\infty) $ with $\Omega \subset \mathbb{R}^3$.
This system governs the evolution of variables $(\rho, \rho \bm{u})$, while the stress tensor $\bm{P}$ has to be modeled for the above equations to be closed.

In turbulence modeling for incompressible flows, it is a common practice to decompose the fluid variables into mean and fluctuating parts: 
\begin{equation}\label{equ:variable-decompose1}
    \begin{aligned}
        \rho = \bar{\rho} + \rho^\prime, \quad \bm{P} = \bar{\bm{P}} + \bm{P}^\prime.
    \end{aligned}
\end{equation}
Here, $\bar{\rho}$ and $\rho^\prime$ denote the Reynolds average (mean) and fluctuation of $\rho$, respectively; and $\bar{\bm{P}}$ and $\bm{P}^\prime$ have similar meanings.
By the nature of the Reynolds average, we have the following fundamental properties \cite{tennekes1972first}:
\begin{equation}\label{equ:average-operator}
    \begin{aligned}
       & \overline{\phi^\prime} =0,\quad \overline{\bar{\phi} \phi^\prime} = \overline{\bar{\phi} \psi^\prime} = 0, \quad  \overline{\phi \psi} = \bar{\phi} \bar{\psi} + \overline{\phi^\prime \psi^\prime}, \quad \overline{\phi^2} = \bar{\phi}^2 + \overline{\phi'^2},\\[3mm]
       & \overline{\frac{\partial \phi}{\partial t}} = \frac{\partial \overline{\phi}}{\partial t}, \quad \overline{\frac{\partial \phi}{\partial x_i}} = \frac{\partial \overline{\phi}}{\partial x_i},\quad (i=1, 2, 3),\\[3mm]
       & \overline{\phi \psi \xi}=\bar{\phi} \bar{\psi} \bar{\xi}+\overline{\phi^{\prime} \psi^{\prime}} \bar{\xi}+\overline{\phi^{\prime} \xi^{\prime}} \bar{\psi}+\overline{\psi^{\prime} \xi^{\prime}} \bar{\phi}+\overline{\phi^{\prime} \psi^{\prime} \xi^{\prime}}.
    \end{aligned}
\end{equation}
Here $\phi$ and $\psi$ are two generic fluid variables.
 
For compressible flows, the Reynolds average is often enhanced by the Favre average (or mass-weighted average) \cite{favre1969statistical}, defined as
\begin{equation}\nonumber
	\tilde{\phi} = \overline{\rho \phi}/\bar{\rho}
\end{equation}
for $\bar{\rho}>0$.
By this definition, the fluid variable is decomposed into
\begin{equation}
	\begin{aligned}\label{equ:variable-decompose2}
		\phi = \tilde{\phi} + \phi^{\prime\prime}
	\end{aligned}
\end{equation}
and the following properties hold:
\begin{equation}\nonumber
	\overline{\rho \phi} = \bar{\rho} \tilde{\phi},\quad \overline{\rho \phi^{\prime\prime}} = 0, 
	\quad \overline{\rho \phi \psi} = \bar{\rho} \tilde{\phi}\tilde{\psi} + \overline{\rho \phi^{\prime\prime} \psi^{\prime\prime}}, 
	\quad \overline{\phi \tilde{\psi}}=\bar{\phi}\tilde{\psi}. 
\end{equation}
It is important to notice that $\overline{\phi^{\prime\prime}}$ may not vanish while $\overline{\phi^{\prime}}=0$.
Note that the Favre average $\tilde{\bm{u}}$ of velocity $\bm{u}$ is identical to its Reynolds average $\bar{\bm{u}}$ for incompressible flows. 

By taking the Reynolds average on the equations \eqref{equ:fluids-equations} and using the Favre average of the fluid velocity, we obtain the following conservation laws 
\begin{equation}\label{equ:rans-cdf}
\begin{aligned}
&\partial_t \bar{\rho}  + \nabla\cdot (\bar{\rho} \bm{\tilde{u}} ) = 0,\\
&\partial_t (\bar{\rho} \bm{\tilde{u}}) + \nabla\cdot (\bar{\rho} \bm{\tilde{u}} \otimes \bm{\tilde{u}} + \bar{\rho}\widetilde{ \bm{u}^{\prime\prime}\otimes \bm{u}^{\prime\prime} }+ \bm{\bar{P}})  = 0.
\end{aligned}
\end{equation}
This system is not closed since it contains the mean stress tensor $\bm{\bar{P}}$ and additional unknown tensor
\begin{equation}\nonumber
	\bm{\tau} \equiv - \bar{\rho}\widetilde{ \bm{u}^{\prime\prime}\otimes \bm{u}^{\prime\prime} }.
\end{equation}
The symmetric semi-negative definite tensor $\bm{\tau}$ is the Reynolds stress that incorporates the effects of turbulent motions on the mean stresses.

The closure problem is to express the mean stress tensor $\bm{\bar{P}}$ and the Reynolds stress tensor $\bm{\tau}$ as functions of the mean-field and other variables.
To do this, we recall the traditional way to close the system \eqref{equ:fluids-equations} for laminar flows, where the stress tensor $\bm{P}$ is modelled by using Newton’s law of viscosity.
This empirical law, together with the conservation laws \eqref{equ:fluids-equations}, forms the classical Navier-Stokes equations.
However, these equations are insufficient in some situations such as fluids with memory or high-frequency heat transfer, where flux or stress relaxation times are significant, as in polymer solutions and suspensions \cite{jou1996extended}. To address such non-classical scenarios, the Conservation-dissipation formalism (CDF) was developed  \cite{zhu2015conservation}. 

For turbulent flows, the Reynolds-stress tensor $\bm{\tau}$ can be viewed as a second-order moment of velocity fluctuations. 
Thus we can use the conservation laws \eqref{equ:fluids-equations} to derive its governing equation, which depends on third-order moments of velocity fluctuations, due to the nonlinearity in equations \eqref{equ:fluids-equations}.
Moreover, the governed equations of third-order moments can be derived and depend on fourth-order moments, and so on. 
In this way, we derive a hierarchy of infinitely many moment equations. 
On physical grounds, this is not surprising, since the operations are strictly mathematical in nature and introduce no additional physical principles. 
 In essence, the Reynolds averaging is a brutal simplification that causes the loss of much of the information contained in the conservation laws. 
The aim of turbulence modelling is to express high-order moments in terms of lower-order moments \cite{wilcox1998turbulence}. 

Another approach is to approximate the Reynolds-stress tensor $\bm{\tau}$ with the so-called Boussinesq approximation \cite{boussinesq1877}
\begin{equation}\label{appro:Boussinesq}
\begin{aligned}
\bm{\tau} = \nu_T \brac{\nabla \bm{\tilde{u}} + (\nabla \bm{\tilde{u}})^T} - \frac{2}{3} k \bm{I}_3.
\end{aligned}
\end{equation}
Here $\nu_T$ is the turbulence eddy viscosity, $k=\frac{1}{2}\bar{\rho} \widetilde{\bm{u}^{\prime\prime}\cdot\bm{u}^{\prime\prime}}$ is the turbulent kinetic energy related to the trace of $\bm{\tau}$, $\bm{I}_3$ is the second-order unit tensor in $\mathbb{R}^3$ and the superscript $T$ denotes the transpose.
This approximation has been used widely in the simulation of practical turbulent flows, although it is not unsatisfactory for compressible flows.

Based on the Boussinesq approximation, several eddy-viscosity models have been developed, and typical examples are one-equation models and two-equation models \cite{wilcox1998turbulence}. 
These models all contain the following turbulent kinetic energy equation:
\begin{equation}\label{equ:sec2-k-equ}
\partial_t  k + \bm{\tilde{u}}\cdot \nabla k = \bm{\tau}:\nabla \bm{\tilde{u}} - \epsilon + \nabla \cdot \brac{ \left(\nu+\nu_T\right) \nabla k },
\end{equation}
where $\epsilon$ is the dissipation rate and $\nu$ is the shear viscosity. Here the colon stands for the double contraction of two second-order tensors: $\bm{A}: \bm{B}=\sum_{i, j} A_{i j} B_{j i}$.
The last equation was derived by taking the trace of the traditional Reynolds-stress equation \cite{wilcox1998turbulence}. It is a second-order partial differential equation, while the original conservation laws \eqref{equ:fluids-equations} contain only first-order derivatives.
Other deficiencies of the Boussinesq approximation can be found in \cite{alfonsi2009reynolds}.

In an attempt to overcome the deficiencies, we write the mean stress tensor as
\begin{equation}\nonumber
    \bar{\bm{P}} = \bar{\bm{\sigma}} + \bar{p} \bm{I}_3
\end{equation}
with the thermodynamic pressure $\bar{p}$. Note that $\bar{\bm{\sigma}}$ is the averaged deviatoric stress tensor which is symmetric.
With $\bar{\bm{\sigma}}$ thus defined, 
we modify the Boussinesq approximation \eqref{appro:Boussinesq} as
\begin{equation}\label{defi:CDF-boussinesq}
\bm{\tau}= -\mu_T \bar{\bm{\sigma}} - \frac{2}{3} k \bm{I}_3
\end{equation}
with $\mu_T$ for the turbulence viscosity.
With this modification, the Reynolds-averaged equations \eqref{equ:rans-cdf} can be rewritten as 
\begin{equation}\label{equ:key-equ}
\begin{aligned}
&\partial_t \bar{\rho}  + \nabla\cdot (\bar{\rho} \bm{\tilde{u}} ) = 0,\\
&\partial_t (\bar{\rho} \bm{\tilde{u}}) + \nabla\cdot \brac{\bar{\rho} \bm{\tilde{u}} \otimes \bm{\tilde{u}} + (\mu_T+1)\bar{\bm{\sigma}} } + \nabla \brac{\bar{p}+\frac{2}{3}k } = 0.
\end{aligned}
\end{equation}
Here the unclosed terms are $\bar{\bm{\sigma}}$, $k$ and $\mu_T$. 
In this work, we simply take $\mu_T = \nu_T / \nu$ and turbulence eddy viscosity $\nu_T=k^{1/2} l$ with $l$ the length scale of the turbulence as in many one-equation turbulence models \cite{wilcox1998turbulence}.

\section{Conservation-dissipation formalism}\label{sec3:CDF}

Here we review the Conservation-dissipation formalism (CDF) developed in \cite{zhu2015conservation} to close the Reynolds-averaged conservation equations \eqref{equ:key-equ}. 
It is a quite new theory of nonequilibrium thermodynamics and is based on the first and second laws thereof. 
The theory is concerned with an irreversible process which obeys some conservation laws like \eqref{equ:key-equ}:
\begin{equation}\label{conservation-laws}
	\partial_t \bm{u} + \sum_{j=1}^3 \partial_{x_j}\bm{f}_j = 0.
\end{equation}	
Here $\bm{u} = \bm{u}(\bm{x}, t) \in \mathbb{R}^n$ represents conserved variables and $\bm{f}_j$ is the corresponding flux along the $x_j$-direction. 
If each $\bm{f}_j$ is given in terms of the conserved variables, the system \eqref{conservation-laws} becomes closed. In this case, the system is considered to be in local equilibrium, and $\bm{u}$ is also referred to as equilibrium variables. 
However, very often $\bm{f}_j$ depends on some extra variables in addition to the conserved ones.
The extra variables characterize non-equilibrium features of the system under consideration, called non-equilibrium or dissipative variables, and their choice is not unique which depends on the physical and mathematical structures of the resultant systems.

Inspired by the Extended Thermodynamics \cite{muller1998rational,jou1996extended}, CDF chooses a set of dissipative variable $\bm{v} \in \mathbb{R}^r$, so the flux $\bm{f}_j$ in \eqref{conservation-laws} can be expressed as $\bm{f}_j=\bm{f}_j(\bm{u}, \bm{v})$ and seeks evolution equations of the form
\begin{equation}\nonumber
	\partial_t \bm{v}+\sum_{j=1}^3 \partial_{x_j} \bm{g}_j(\bm{u}, \bm{v})=\bm{q}(\bm{u}, \bm{v}).
\end{equation}
Here $\bm{g}_j(\bm{u}, \bm{v})$ is the corresponding flux and $\bm{q}=\bm{q}(\bm{u}, \bm{v})$ is the nonzero source, vanishing at equilibrium. 
Together with the conservation laws \eqref{conservation-laws}, the evolution of a non-equilibrium state is then governed by the following system of first-order PDEs
\begin{equation}\label{CDF-PDE}
	\partial_t \bm{U}+\sum_{j=1}^3 \partial_{x_j} \bm{F}_j(\bm{U})=\bm{Q}(\bm{U}),
\end{equation}
where
\begin{equation}\nonumber
	\begin{aligned}
		\bm{U}=\begin{pmatrix}
			\bm{u} \\ \bm{v}
		\end{pmatrix}, \quad \bm{F}_j(\bm{U})=\begin{pmatrix}
			\bm{f}_j(\bm{U}) \\ \bm{g}_j(\bm{U})
		\end{pmatrix}, \quad \bm{Q}(\bm{U})=\begin{pmatrix}
			\bm{0} \\ \bm{q}(\bm{U})
		\end{pmatrix}.
	\end{aligned}
\end{equation}

Furthermore, CDF requires the balance laws \eqref{CDF-PDE} to respect the {\it{Conservation-dissipation Principle}} proposed in \cite{yong2008interesting}:\begin{itemize}
	\item[(i)] There is a strictly concave smooth function $\eta=\eta(\bm{U})$, called entropy, such that the matrix product $\eta_{\bm{U} \bm{U}} \bm{F}_{j \bm{U}}(\bm{U})$ is symmetric for each $j$ and for all $\bm{U}$ under consideration.
	\item[(ii)] There is a positive definite matrix $\bm{M}=\bm{M}(\bm{U})$, called dissipation matrix, such that the non-zero source can be written as $\bm{q}(\bm{U})=\bm{M}(\bm{U}) \eta_{\bm{v}}(\bm{U})$.
\end{itemize}
Here the subscript stands for the corresponding partial derivative, for instance, $\eta_{\bm{v}}=\frac{\partial \eta}{\partial \bm{v}}$ and $\eta_{\bm{U} \bm{U}}=\frac{\partial^2 \eta}{\partial \bm{U}^2}$, and $\eta_v(\bm{U})$ should be understood as a column vector. Note that the dissipation matrix is not assumed to be symmetric and its positive definiteness means that of the symmetric part $\frac{\bm{M}+\bm{M}^T}{2}$. This principle was shown in \cite{yong2008interesting} to be a strengthened version of the structural stability conditions proposed in \cite{yong1999singular,yong2004entropy} for hyperbolic systems of PDEs with relaxation.

Balance laws \eqref{CDF-PDE} together with the conservation-dissipation principle will be referred to as conservation-dissipation formalism (CDF). This formalism has two freedoms: the entropy function $\eta=\eta(\bm{U})$ and the dissipation matrix $\bm{M}=\bm{M}(\bm{U})$. They are both functions of the state variable $\bm{U}$. The former is strictly concave and the latter is positive definite. Except these, no further restriction is imposed on $\eta=\eta(\bm{U})$ or $\bm{M}=\bm{M}(\bm{U})$. Specific expressions of $\eta(\bm{U})$ and $\bm{M}(\bm{U})$ should be problem-dependent.

Here are some simple comments on the conservation-dissipation principle. Condition (i) is the well-known entropy condition for hyperbolic conservation laws \cite{godunov1961interesting,friedrichs1971systems}. It corresponds to the classical thermodynamics stability. This condition ensures that the first-order system \eqref{CDF-PDE} is globally symmetrizable hyperbolic and thereby well-posed \cite{majda1984compressible}. It implies that there is a function $\bm{J}_j=\bm{J}_j(\bm{U})$ such that
\begin{equation}\label{cdf-sys}
	\eta_{\bm{U}} \cdot \bm{F}_{j \bm{U}}=\bm{J}_{j \bm{U}} .
\end{equation}
This imposes a restriction on the flux $\bm{g}_j(\bm{u}, \bm{v})$. Moreover, we can use equations \eqref{CDF-PDE} and \eqref{cdf-sys} to compute the rate of change of entropy:
\begin{equation}\nonumber
\begin{aligned}
\partial_t \eta & =-\sum_{j=1}^3 \eta_{\bm{U}} \cdot \partial_{x_j} \bm{F}_j+\eta_{\bm{U}} \cdot \bm{q} \\
& =- \nabla \cdot \bm{J} + \bm{H}
\end{aligned}
\end{equation}
with the entropy production $\bm{H}=\eta_{\bm{U}} \cdot \bm{M}(\bm{U}) \eta_{\bm{U}} \geq 0$. Here Condition (ii) has been used. Thus, the second law of thermodynamics and thereby the time-irreversibility are respected automatically by the CDF-based system \eqref{CDF-PDE}.
In addition, Condition (ii) can be viewed as a nonlinearization of the celebrated Onsager reciprocal relation for scalar processes \cite{yong2020cdf,de1984non}. 

We conclude this section by seeking closure equations for the deviatoric stress tensor $\bm{\bar{\sigma}}$ and turbulent kinetic energy $k$ in the conservation laws \eqref{equ:key-equ} to illustrate how CDF works. 
To do this, we follow \cite{zhu2015conservation}  and introduce two non-equilibrium state variables $\bm{C}$ and $w$ which have the respective sizes of the tensor $\bm{\bar{\sigma}}$ and $k$. 
Then we specify an entropy function of the form
\begin{equation}\nonumber
	\eta = \eta(\bar{\rho}, \bar{\rho} \bm{\tilde{u}}, \bar{\rho} \bm{C}, \bar{\rho} w) = \bar{\rho} s(1/\bar{\rho},  \bm{\tilde{u}}, \bm{C}, w).
\end{equation}
It was shown in \cite{yang2018generalized} that $\eta= \eta(\bar{\rho}, \bar{\rho} \bm{\tilde{u}}, \bar{\rho} \bm{C}, \bar{\rho} w)$ is strictly concave if and only if so is $s=s(v,  \bm{\tilde{u}}, \bm{C}, w)$ with $v=1/\bar{\rho}$.

In what follows, we choose $s=s(v,  \bm{\tilde{u}}, \bm{C}, w)$ (like that in \eqref{entropy-form} below) to be a strictly concave function and
\begin{equation}\nonumber
	s_{\tilde{\bm{u}}}=-\tilde{\bm{u}},
\end{equation}
which is consistent with the classical Gibbs relation for isothermal fluids.
Define
\begin{equation}\nonumber
	\bar{p} = \frac{\partial s}{\partial v}.
\end{equation}
It is not difficult to verify that 
\begin{equation}\label{equ:gibbs-rela}
\eta-\bar{p} = \bar{\rho} \eta_{\bar{\rho}} + \bar{\rho}\bm{\tilde{u}} \cdot \eta_{\bar{\rho}\bm{\tilde{u}}} + \bar{\rho} \bm{C} : \eta_{\bar{\rho} \bm{C}}^T + \bar{\rho}w \eta_{\bar{\rho}w}, \qquad \eta_{\bar{\rho}\bm{\tilde{u}}} = s_{\tilde{\bm{u}}}= -\bm{\tilde{u}}.
\end{equation}

With last relations, we use the equations \eqref{equ:key-equ} to compute the change rate of entropy:
\begin{equation}\nonumber
\begin{aligned}
\partial_t \eta ={}& \eta_{\bar{\rho}}\partial_t \bar{\rho} + \eta_{\bar{\rho}\bm{\tilde{u}}}\cdot \partial_t(\bar{\rho}\bm{\tilde{u}}) + \eta_{\bar{\rho} \bm{C}}^T:\partial_t(\bar{\rho} \bm{C}) + \eta_{\bar{\rho}w} \partial_t(\bar{\rho}w) \\[2mm]
={}& -\eta_{\bar{\rho}} \nabla\cdot (\bar{\rho}\bm{\tilde{u}}) -  \nabla \cdot \brac{\bar{\rho}\bm{\tilde{u}}\otimes\bm{\tilde{u}} + \bar{p} \bm{I} + \brac{\mu_T+1}\bar{\bm{\sigma}} +\frac{2}{3}k \bm{I} } \cdot \eta_{\bar{\rho}\bm{\tilde{u}}} + \eta_{\bar{\rho} \bm{C}}^T:\partial_t(\bar{\rho} \bm{C}) + \eta_{\bar{\rho}w} \partial_t(\bar{\rho}w)\\[2mm]
={}& -\bigg( \bar{\rho} \eta_{\bar{\rho}} + \bar{\rho}\bm{\tilde{u}} \cdot \eta_{\bar{\rho}\bm{\tilde{u}}} + \bar{\rho} \bm{C} : \eta_{\bar{\rho} \bm{C}}^T + \bar{\rho}w\eta_{\bar{\rho}w} \bigg) \nabla \cdot \bm{\tilde{u}} 
- \bm{\tilde{u}} \cdot \bigg( \eta_{\bar{\rho}} \nabla \bar{\rho} + \eta_{\bar{\rho}\bm{\tilde{u}}} \nabla(\bar{\rho}\bm{\tilde{u}}) + \eta_{\bar{\rho} \bm{C}}^T: \nabla(\bar{\rho} \bm{C})  + \eta_{\bar{\rho}w} \nabla(\bar{\rho}w) \bigg)  \\[2mm]
{}& - \nabla \cdot \brac{\bar{p} \bm{I} + \brac{\mu_T+1}\bar{\bm{\sigma}} +\frac{2}{3}k \bm{I}} \cdot \eta_{\bar{\rho}\bm{\tilde{u}}}
+ \eta_{\bar{\rho} \bm{C}}^T:\bigg(\partial_t(\bar{\rho} \bm{C}) + \nabla\cdot (\bar{\rho} \bm{C}\bm{\tilde{u}}) \bigg)  
+ \eta_{\bar{\rho}w} \bigg(\partial_t(\bar{\rho}w)+\nabla\cdot (\bar{\rho}w \bm{\tilde{u}}) \bigg)  \\[2mm]
={}& -(\eta - \bar{p})\nabla\cdot\bm{\tilde{u}} - \bm{\tilde{u}}\cdot \nabla \eta 
+ \nabla \cdot \brac{\bar{p} \bm{I} + \brac{\mu_T+1}\bar{\bm{\sigma}}+\frac{2}{3} k \bm{I}}\cdot \bm{\tilde{u}} \\[2mm]
{}&+ \eta_{\bar{\rho} \bm{C}}^T:\bigg(\partial_t(\bar{\rho} \bm{C}) + \nabla\cdot (\bar{\rho} \bm{C}\bm{\tilde{u}}) \bigg)  + \eta_{\bar{\rho}w} \bigg(\partial_t(\bar{\rho}w)+\nabla\cdot (\bar{\rho}w \bm{\tilde{u}}) \bigg)  \\[2mm]
={}& -\nabla\cdot\bigg(\bm{\tilde{u}} (\eta - \bar{p})\bigg)  
+ \nabla\cdot\bigg( \big(\brac{\mu_T+1}\bar{\bm{\sigma}} +\frac{2}{3} k \bm{I}\big)\cdot \bm{\tilde{u}} \bigg) 
- \brac{\brac{\mu_T+1}\bar{\bm{\sigma}}^T +\frac{2}{3} k \bm{I}}:\nabla\bm{\tilde{u}}\\[2mm]
{}&+ \eta_{\bar{\rho} \bm{C}}^T:\bigg(\partial_t(\bar{\rho} \bm{C}) + \nabla\cdot (\bar{\rho} \bm{C}\bm{\tilde{u}}) \bigg)  + \eta_{\bar{\rho}w} \bigg(\partial_t(\bar{\rho}w)+\nabla\cdot (\bar{\rho}w \bm{\tilde{u}}) \bigg)  \\[2mm]
={}& -\nabla \cdot \bigg((\eta - \bar{p} - \frac{2}{3} k)\bm{\tilde{u}}  - \brac{\mu_T+1}\bm{\bar{\sigma}}\cdot\bm{\tilde{u}} \bigg)  \\[2mm]
{}&+ \eta_{\bar{\rho} \bm{C}}^T:\bigg(\partial_t(\bar{\rho} \bm{C}) + \nabla \cdot (\bm{\tilde{u}} \bar{\rho} \bm{C}) \bigg) 
- \brac{\mu_T+1}\bm{\bar{\sigma}}^T:\nabla \bm{\tilde{u}} 
+ \eta_{\bar{\rho}w} \bigg(\partial_t(\bar{\rho}w)+\nabla \cdot  (\bar{\rho}w\bm{\tilde{u}}) \bigg)
- \frac{2}{3} k \nabla\cdot \bm{\tilde{u}}  \\[2mm]
={}& -\nabla \cdot \bm{J} + \bm{H}.
\end{aligned}
\end{equation}
Here $\bm{J} = (\eta - \bar{p} - \frac{2}{3}k)\bm{\tilde{u}}  - \brac{\mu_T+1}\bm{\bar{\sigma}}\cdot\bm{\tilde{u}} $ is the entropy flux and 
\begin{equation}\nonumber
    \begin{aligned}
        \bm{H} =\eta_{\bar{\rho} \bm{C}}^T:\bigg(\partial_t(\bar{\rho} \bm{C}) + \nabla \cdot (\bm{\tilde{u}} \bar{\rho} \bm{C}) \bigg) 
        - \brac{\mu_T+1}\bm{\bar{\sigma}}^T:\nabla \bm{\tilde{u}} 
        + \eta_{\bar{\rho}w} \bigg(\partial_t(\bar{\rho}w)+\nabla \cdot  (\bar{\rho}w\bm{\tilde{u}}) \bigg)
        - \frac{2}{3}k \nabla\cdot \bm{\tilde{u}}        
    \end{aligned}
\end{equation}
is the entropy production. 
Motivated by this expression of the entropy production, we refer to \cite{zhu2015conservation} and take 
\begin{equation}\nonumber
    (\mu_T+1) \bm{\bar{\sigma}} = \eta_{\bar{\rho}\bm{C}}, \quad  k = \eta_{\bar{\rho}w},
\end{equation}
meaning that the tensor $\bm{C}$ has been chosen to be symmetric due to the symmetry of $\bar{\bm{\sigma}}$.
Then, for the entropy production to be positive, we choose the evolution equations for the nonequilibrium variables $\bm{C}$ and $w$:
\begin{equation}\label{equ:cdf-evo-equ-0}
\begin{pmatrix}
      \partial_t (\bar{\rho}\bm{C}) + \nabla \cdot (\bar{\rho}\bm{\tilde{u}}\otimes \bm{C}) - \frac{1}{2}\brac{\nabla\bm{\tilde{u}} + (\nabla\bm{\tilde{u}})^T}\\[2mm]
      \partial_t (\bar{\rho}w) + \nabla \cdot (\bar{\rho}w\bm{\tilde{u}}) - \frac{2}{3} \nabla\cdot \bm{\tilde{u}} \\[2mm]
\end{pmatrix}
    = \bm{M} \begin{pmatrix}
         (\mu_T+1) \bm{\bar{\sigma}}\\[2mm]
         k\\[2mm]
    \end{pmatrix} 
\end{equation}
with $\bm{M}=\bm{M}(\bar{\rho},\bar{\rho} \bm{\tilde{u}},  \bar{\rho}\bm{C}, \bar{\rho}w) $ a positive definite matrix. Such a matrix can be chosen similarly as that in \eqref{choice-M-2} below. 
The last equations and the conservation laws \eqref{equ:key-equ} together form a closed system of $11$ equations as our toy model.

This toy model is globally symmetrizable hyperbolic and thereby well-posed \cite{majda1984compressible}.
But it does not appear to be compatible with the turbulent kinetic energy equation \eqref{equ:sec2-k-equ} and thereby not compatible with existing well-validated theories. This indicates that the toy model does not properly characterize the dissipation and production mechanisms for the turbulent kinetic energy.

\section{Hyperbolic turbulence models}\label{sec4:cdf-model}

In this section, we address the production and dissipation mechanisms of turbulent kinetic energy and propose a new hyperbolic turbulence model by closing the conservation laws \eqref{equ:key-equ} with the CDF.

In order to obtain a model compatible with Prandtl’s one-equation model, we introduce a new non-equilibrium variable $\bm{y}\in \mathbb{R}^3$ for the diffusion term in equation \eqref{equ:sec2-k-equ}. 
As in the previous section, we choose a strictly concave entropy function $s=s(1/\bar{\rho},  \bm{\tilde{u}}, \bm{C}, w, \bm{y})$ with $s_{\tilde{\bm{u}}}=-\tilde{\bm{u}}$ and take the entropy function
\begin{equation}\nonumber
\eta=\eta(\bar{\rho}, \bar{\rho} \bm{\tilde{u}}, \bar{\rho} \bm{C}, \bar{\rho}w,\bar{\rho} \bm{y})= \bar{\rho} s(1/\bar{\rho},  \bm{\tilde{u}}, \bm{C}, w, \bm{y}).
\end{equation}
As before, it is easy to show the following relations
\begin{equation}\nonumber
\eta-\bar{p} = \bar{\rho} \eta_{\bar{\rho}} + \bar{\rho}\bm{\tilde{u}} \cdot \eta_{\bar{\rho}\bm{\tilde{u}}} + \bar{\rho} \bm{C} : \eta_{\bar{\rho} \bm{C}}^T + \bar{\rho}w\eta_{\bar{\rho}w}+\bar{\rho}\bm{y}\cdot \eta_{\bar{\rho}\bm{y}}, \qquad \eta_{\bar{\rho}\bm{\tilde{u}}} = -\bm{\tilde{u}}.
\end{equation}

With these relations, we use the conservation laws  \eqref{equ:key-equ} and compute the change rate of entropy:
\begin{equation}\nonumber
\begin{aligned}
\partial_t \eta ={}& \eta_{\bar{\rho}}\partial_t \bar{\rho} + \eta_{\bar{\rho}\bm{\tilde{u}}}\cdot \partial_t(\bar{\rho}\bm{\tilde{u}}) + \eta_{\bar{\rho} \bm{C}}^T:\partial_t(\bar{\rho} \bm{C}) + \eta_{\bar{\rho}w} \partial_t(\bar{\rho}w)+\eta_{\bar{\rho}\bm{y}}\cdot \partial_t (\bar{\rho}\bm{y}) \\[2mm]
={}& -\eta_{\bar{\rho}} \nabla\cdot (\bar{\rho}\bm{\tilde{u}}) -  \nabla \cdot \brac{\bar{\rho}\bm{\tilde{u}}\otimes\bm{\tilde{u}} + \bar{p} \bm{I} + \brac{\mu_T+1}\bar{\bm{\sigma}} +\frac{2}{3}k \bm{I} } \cdot \eta_{\bar{\rho}\bm{\tilde{u}}} \\[2mm]
{}&+ \eta_{\bar{\rho} \bm{C}}^T:\partial_t(\bar{\rho} \bm{C}) + \eta_{\bar{\rho}w} \partial_t(\bar{\rho}w)+\eta_{\bar{\rho}\bm{y}}\cdot \partial_t (\bar{\rho}\bm{y})\\[2mm]
={}& -\bigg( \bar{\rho} \eta_{\bar{\rho}} + \bar{\rho}\bm{\tilde{u}} \cdot \eta_{\bar{\rho}\bm{\tilde{u}}} + \bar{\rho} \bm{C} : \eta_{\bar{\rho} \bm{C}}^T + \bar{\rho}w\eta_{\bar{\rho}w} + \bar{\rho}\bm{y}\cdot \eta_{\bar{\rho}\bm{y}} \bigg) \nabla \cdot \bm{\tilde{u}} \\[2mm]
{}&- \bigg( \eta_{\bar{\rho}} \nabla \bar{\rho} + \eta_{\bar{\rho}\bm{\tilde{u}}} \nabla (\bar{\rho}\bm{\tilde{u}}) + \eta_{\bar{\rho} \bm{C}}^T: \nabla (\bar{\rho} \bm{C})  + \eta_{\bar{\rho}w}\nabla (\bar{\rho}w)+\eta_{\bar{\rho}\bm{y}}\cdot (\nabla (\bar{\rho}\bm{y}))^T \bigg) \cdot\bm{\tilde{u}}  \\[2mm]
{}& - \nabla \cdot  \brac{\bar{p} \bm{I} + \brac{\mu_T+1}\bm{\bar{\sigma}} +\frac{2}{3}k \bm{I}} \cdot \eta_{\bar{\rho}\bm{\tilde{u}}}
+ \eta_{\bar{\rho} \bm{C}}^T:\bigg(\partial_t(\bar{\rho} \bm{C}) + \nabla\cdot (\bar{\rho}\bm{\tilde{u}} \bm{C}) \bigg)  
+ \eta_{\bar{\rho}w} \bigg(\partial_t(\bar{\rho}w)+\nabla\cdot (\bar{\rho}\bm{\tilde{u}}w) \bigg)  \\[2mm]
{}& + \bigg( \partial_t(\bar{\rho}\bm{y})+\nabla\cdot(\bar{\rho}\bm{\tilde{u}}\otimes \bm{y}) \bigg)\cdot  \eta_{\bar{\rho}\bm{y}}\\[2mm]
={}& -(\eta - \bar{p})\nabla\cdot\bm{\tilde{u}} - \bm{\tilde{u}}\cdot\nabla \eta 
+ \nabla\cdot \brac{\bar{p} \bm{I} + \brac{\mu_T+1}\bm{\bar{\sigma}} +\frac{2}{3}k \bm{I}}\cdot \bm{\tilde{u}} \\[2mm]
{}&+ \eta_{\bar{\rho} \bm{C}}^T:\bigg(\partial_t(\bar{\rho} \bm{C}) + \nabla\cdot (\bar{\rho}\bm{\tilde{u}} \bm{C}) \bigg)  + \eta_{\bar{\rho}w} \bigg(\partial_t(\bar{\rho}w)+\nabla\cdot(\bar{\rho}\bm{\tilde{u}}w) \bigg) + \bigg( \partial_t(\bar{\rho}\bm{y})+\nabla\cdot(\bar{\rho}\bm{\tilde{u}}\otimes\bm{y}) \bigg)\cdot \eta_{\bar{\rho}\bm{y}} \\[2mm]
={}& -\nabla \cdot \bigg((\eta-\bar{p}-\frac{2}{3}k)\bm{\tilde{u}}  - \brac{\mu_T+1}\bm{\bar{\sigma}}\cdot\bm{\tilde{u}} +  \xi k \eta_{\bar{\rho}\bm{y}} \bigg) + \eta_{\bar{\rho} \bm{C}}^T:\bigg(\partial_t(\bar{\rho} \bm{C}) + \nabla \cdot ( \bar{\rho}\bm{\tilde{u}} \bm{C}) \bigg) 
- \brac{\mu_T+1}\bm{\bar{\sigma}}^T:\nabla \bm{\tilde{u}} \\[2mm]
{}&+ \eta_{\bar{\rho}w} \bigg(\partial_t(\bar{\rho}w)+\nabla \cdot  (\bar{\rho}\bm{\tilde{u}}w) \bigg)-\frac{2}{3}k \nabla\cdot \bm{\tilde{u}} +\xi k\nabla\cdot \eta_{\bar{\rho}\bm{y}}   +  \bigg( \partial_t(\bar{\rho}\bm{y})+\nabla\cdot(\bar{\rho}\bm{\tilde{u}}\otimes\bm{y}) \bigg)\cdot \eta_{\bar{\rho}\bm{y}} + \xi \eta_{\bar{\rho}\bm{y}}\cdot \nabla k \\[2mm]
={}& -\nabla \cdot \bm{J} + \bm{H}.
\end{aligned}
\end{equation}
Here $\bm{J} = (\eta - \bar{p} - \frac{2}{3} k)\bm{\tilde{u}}  - \brac{\mu_T+1}\bm{\bar{\sigma}}^T\cdot\bm{\tilde{u}} +\xi  k \eta_{\bar{\rho}\bm{y}} $ is the entropy flux and 
\begin{equation}\label{relation:entropy production}
\begin{aligned}
    \bm{H} ={}&\eta_{\bar{\rho} \bm{C}}^T:\bigg(\partial_t(\bar{\rho} \bm{C}) + \nabla \cdot (\bar{\rho}\bm{\tilde{u}}  \bm{C}) \bigg) - \brac{\mu_T+1}\bm{\bar{\sigma}}^T:\nabla \bm{\tilde{u}}\\[2mm] 
    {}&+ \eta_{\bar{\rho}w} \bigg(\partial_t(\bar{\rho}w)+\nabla \cdot  (\bar{\rho}\bm{\tilde{u}}w) \bigg)-\frac{2}{3} k \nabla\cdot \bm{\tilde{u}} + \xi k\nabla\cdot \eta_{\bar{\rho}\bm{y}}\\[2mm]
    {}&+ \bigg( \partial_t(\bar{\rho}\bm{y})+\nabla\cdot(\bar{\rho}\bm{\tilde{u}}\otimes\bm{y}) \bigg)\cdot \eta_{\bar{\rho}\bm{y}} + \xi \eta_{\bar{\rho}\bm{y}}\cdot \nabla k
\end{aligned}
\end{equation}
is the entropy production with $\xi$ a positive constant. 

Motivated by the last expression of the entropy production, we refer to Section \ref{sec3:CDF} and take 
\begin{equation}\nonumber
    (\mu_T+1) \bm{\bar{\sigma}} = \eta_{\bar{\rho}\bm{C}}, \quad  k = \eta_{\bar{\rho}w}.
\end{equation}
Then, for the entropy production to be positive, we choose evolution equations for the nonequilibrium variables $\bm{C}$, $w$ and $\bm{y}$ as
\begin{equation}\label{equ:cdf-evo-equ-1}
\begin{pmatrix}
      \partial_t (\bar{\rho}\bm{C}) + \nabla \cdot (\bar{\rho}\bm{\tilde{u}}\bm{C}) - \frac{1}{2}\brac{\nabla\bm{\tilde{u}} + (\nabla\bm{\tilde{u}})^T}\\[2mm]
      \partial_t (\bar{\rho}w) + \nabla \cdot (\bar{\rho}\bm{\tilde{u}}w) - \frac{2}{3} \nabla\cdot \bm{\tilde{u}} + \xi \nabla \cdot \eta_{\bar{\rho}\bm{y}} \\[2mm]
       \partial_t (\bar{\rho} \bm{y}) + \nabla \cdot (\bar{\rho}\bm{\tilde{u}}\otimes \bm{y}) + \xi \nabla k \\[2mm]
\end{pmatrix}
    = \bm{M}\begin{pmatrix}
         (\mu_T+1) \bm{\bar{\sigma}}\\[2mm]
         k\\[2mm]
         \eta_{\bar{\rho}\bm{y}}\\[2mm]
    \end{pmatrix} 
\end{equation}
with $\bm{M}=\bm{M}(\bar{\rho},\bar{\rho} \bm{\tilde{u}},  \bar{\rho}\bm{C}, \bar{\rho}w,  \bar{\rho}\bm{y})$ positive definite. 
The last equations and the conservation laws \eqref{equ:key-equ} together form a closed system of $14$ equations as our hyperbolic turbulence model.

Next, we present some specific choices of the entropy function and dissipation matrix. 
Given a strictly concave function $s^{eq}=s^{eq}(v)$ as a specific entropy for an isothermal flow in equilibrium,
we take $\eta = \bar{\rho} s$ and 
\begin{equation}\label{entropy-form}
s = s^{eq}(1/\bar{\rho})  - \frac{|\bm{\tilde{u}}|^2}{2}  - \frac{|\bm{C} |^2}{2\alpha_1}- \frac{w^2}{2\alpha_2}- \frac{|\bm{y}|^2}{2\alpha_3}
\end{equation}
with $\alpha_1, \alpha_2$ and $\alpha_3$ three positive parameters. This function is obviously strictly concave and satisfies $s_{\tilde{\bm{u}}} = -\tilde{\bm{u}}$.
With this choice, we have 
\begin{equation}\nonumber
(\mu_T+1)\bar{\bm{\sigma}}=\eta_{\bar{\rho}\bm{C}} =-\frac{\bm{C}} {\alpha_1}, \quad k = \eta_{\bar{\rho}w}=-\frac{ w}{\alpha_2}, \quad \eta_{\bar{\rho}\bm{y}} =-\frac{\bm{y}}{\alpha_3}.
\end{equation}
Hence the equation \eqref{equ:cdf-evo-equ-1} become
\begin{equation}\label{equ:cdf-evo-equ}
\begin{pmatrix}
   \alpha_1\left[\partial_t\left((\mu_T+1)\bar{\rho} \bar{\bm{\sigma}}\right)+\nabla \cdot\left( (\mu_T+1)\bar{\rho} \tilde{\bm{u}} \bar{\bm{\sigma}}\right)\right]+ \frac{1}{2}\brac{\nabla\bm{\tilde{u}} + (\nabla\bm{\tilde{u}})^T}\\[2mm]
    \alpha_2\left[ \partial_t (\bar{\rho} k) + \nabla \cdot (\bar{\rho} k\tilde{\bm{u}}) \right] + \frac{2}{3} \nabla\cdot \bm{\tilde{u}} + \frac{1}{\alpha_3} \nabla \cdot \bm{y} \\[2mm]
     -\partial_t (\bar{\rho} \bm{y}) - \nabla \cdot (\bar{\rho}\bm{\tilde{u}}\otimes \bm{y}) - \nabla k \\[2mm]
\end{pmatrix}
    = -\bm{M}\begin{pmatrix}
         (\mu_T+1) \bm{\bar{\sigma}}\\[2mm]
         k\\[2mm]
         \eta_{\bar{\rho}\bm{y}}\\[2mm]
    \end{pmatrix}. 
\end{equation}

In order to choose the dissipation matrix $\bm{M}$, we review Prandtl’s one-equation model \cite{Prandtl1945}, which is 
just the turbulent kinetic energy equation \eqref{equ:sec2-k-equ} with the dissipation rate $\epsilon = C_D \frac{k^{3 / 2}}{l}$ and $C_D$ being a closure coefficient:
\begin{equation}\label{equ:k-equ-prandtl}
	\partial_t  k + \bm{\tilde{u}}\cdot \nabla k = \bm{\tau}:\nabla \bm{\tilde{u}} - C_D \frac{k^{3/2}}{l} + \nabla \cdot \brac{ \left(\nu+\nu_T\right) \nabla k }
\end{equation}
for an incompressible flow.
Denote by $\bm{S}$ the mean strain-rate tensor:
\begin{equation}\nonumber
	\bm{S}=\frac{1}{2}\brac{\nabla \bm{\tilde{u}} + (\nabla \bm{\tilde{u}})^T} .
\end{equation}
With the Boussinesq approximation \eqref{appro:Boussinesq} for the Reynolds stress tensor $\bm{\tau}$, the production term in \eqref{equ:k-equ-prandtl} can be reformulated as
\begin{equation}\nonumber
\begin{aligned}
	\bm{\tau}:\nabla \bm{\tilde{u}}\equiv {}\brac{2\nu_T\bm{S} - \frac{2}{3}k\bm{I}_3}:\nabla \bm{\tilde{u}}={}2\nu_T\bm{S}:\nabla \bm{\tilde{u}}-\frac{2}{3} k \underbrace{\nabla \cdot \bm{\tilde{u}}}_{=0}={}2\nu_T\bm{S}:\frac{1}{2}\brac{\nabla \bm{\tilde{u}} + (\nabla \bm{\tilde{u}})^T}={}2\nu_T\bm{S}:\bm{S}
\end{aligned}
\end{equation}
for incompressible flows where $\nabla \cdot \bm{\tilde{u}}=0$.
Thus, the one-equation \eqref{equ:k-equ-prandtl} can be rewritten as 
\begin{equation}\label{equ:prandtl-one-equ}
	\partial_t  k + \bm{\tilde{u}}\cdot \nabla k = 2\nu_T\bm{S}:\bm{S} - C_D \frac{k^{3/2}}{l} + \nabla \cdot \brac{ \left(\nu+\nu_T\right) \nabla k }.
\end{equation}

For the new constitutive equations \eqref{equ:cdf-evo-equ} to be compatible with \eqref{equ:prandtl-one-equ}, we choose the dissipation matrix $\bm{M}$ such that
\begin{equation}\label{choice-M}
\bm{M} \begin{pmatrix}
         (\mu_T+1) \bar{\bm{\sigma}}\\[2mm]
         k\\[2mm]
        \eta_{\bar{\rho}\bm{y}}
	\end{pmatrix} = \begin{pmatrix}
    	\frac{\bar{\rho} \bar{\bm{\sigma}}}{\nu}\\[2mm]
    	-\frac{2 \beta \bar{\rho} \nu_T}{\nu^2} \bar{\bm{\sigma}} : \bar{\bm{\sigma}} + \frac{\beta C_D\bar{\rho}}{l}k^{3/2} \\[2mm]
    	\frac{\bar{\rho} \eta_{\bar{\rho}\bm{y}}}{\nu + \nu_T}
	\end{pmatrix} 
\end{equation}
with $\beta$ a positive constant.
Obviously, the symmetric matrix  \begin{equation}\label{choice-M-2}
	\begin{aligned}
		\bm{M} = \begin{pmatrix}
			 \frac{\bar{\rho}}{\nu (\mu_T +1)}\brac{1 + \frac{2\beta \nu_T k}{\nu}}\bm{I}_6 & - \frac{2 \beta\bar{\rho} \nu_T}{\nu^2(\mu_T+1)}\bar{\bm{\sigma}} & 0\\[3mm]
			-\frac{2\beta \bar{\rho} \nu_T}{\nu^2(\mu_T+1)}\bar{\bm{\sigma}}^T &C_D\beta\bar{\rho} \frac{\nu_T}{l^2} & 0\\[3mm]
			0 & 0 & \frac{\bar{\rho} }{\nu + \nu_T}\bm{I}_3
		\end{pmatrix},
	\end{aligned}
\end{equation}
ensures \eqref{choice-M} and is positive definite under the condition
\begin{equation}\label{constraint-k}
	 \frac{4 \beta\nu_T l^2}{\nu^2(\nu_T+\nu)}|\bar{\bm{\sigma}}|^2 < C_D\brac{1 + \frac{2\beta\nu_T}{\nu}k}.
\end{equation}
Consequently, the evolution equations \eqref{equ:cdf-evo-equ} become
\begin{equation}\label{equ:sht-model}
\begin{aligned}
& \alpha_1\left[\partial_t\left((\mu_T+1)\bar{\rho} \bar{\bm{\sigma}}\right)+\nabla \cdot\left( (\mu_T+1)\bar{\rho} \tilde{\bm{u}} \bar{\bm{\sigma}}\right)\right]+ \frac{1}{2}\brac{\nabla\bm{\tilde{u}} + (\nabla\bm{\tilde{u}})^T}=-\frac{\bar{\rho}\bar{\bm{\sigma}}}{\nu}, \\[2mm]
&\alpha_2\left[ \partial_t (\bar{\rho} k) + \nabla \cdot (\bar{\rho} \tilde{\bm{u}}k) \right] + \frac{2}{3} \nabla\cdot \bm{\tilde{u}} + \frac{\xi}{\alpha_3} \nabla \cdot \bm{y} = \frac{2\beta\bar{\rho}\nu_T}{\nu^2} \bar{\bm{\sigma}} :\bar{\bm{\sigma}} - \beta C_D \frac{\bar{\rho} k^{3/2}}{l}, \\[2mm]
&\partial_t (\bar{\rho} \bm{y}) + \nabla \cdot (\bar{\rho}\tilde{\bm{u}}\otimes \bm{y}) + \xi\nabla k 
 = -\frac{\bar{\rho}\bm{y}}{\alpha_3(\nu + \nu_T)}.
\end{aligned}
\end{equation}

The compatibility of the last equations with \eqref{equ:prandtl-one-equ} can be easily seen as follows.  
When the relaxation parameters $\alpha_1$ and $\alpha_3$ are small, we apply the Maxwell iteration \cite{muller1998rational} to \eqref{equ:sht-model} and obtain 
\begin{equation}\nonumber
\begin{aligned}
	\bar{\rho}\bar{\bm{\sigma}} \approx - \frac{\nu}{2}\brac{\nabla\bm{\tilde{u}} + (\nabla\bm{\tilde{u}})^T}=-\nu \bm{S}, \quad  \bar{\rho}\bm{y} \approx - \xi \alpha_3(\nu + \nu_T) \nabla k.	
\end{aligned}
\end{equation}
Substituting these approximations of $\bar{\rho}\bar{\bm{\sigma}}$ and $\bar{\rho}\bm{y}$, 
together with incompressibility $\bar{\rho}=1$ and $\nabla \cdot \bm{\tilde{u}}=0$,  into the second equation in \eqref{equ:sht-model} with $\alpha_2=\xi= \beta =1$ simply gives Prandtl's one-equation \eqref{equ:prandtl-one-equ}.

\section{Scaling Analysis}\label{sec:5-compatibility}

In this section, we rescale the constructed model for low Mach number flows.
First of all, we write the conservation laws \eqref{equ:key-equ} and the constitutive  relations \eqref{equ:sht-model} together as
\begin{equation}\label{equ:full-equ}
\begin{aligned}
& \partial_t \rho +\nabla \cdot (\rho \bm{u} )=0, \\[2mm]
& \partial_t \brac{ \rho\bm{u} } + \nabla \cdot \left(\rho\bm{u} \otimes \bm{u} + \brac{\mu_T+1} \bm{\sigma} \right) + \nabla \brac{p+ \frac{2}{3}k} =0, \\[2mm]
& \alpha_1 \big[ \partial_t\left( (\mu_T+1)\rho \bm{\sigma}\right)+\nabla \cdot\left( (\mu_T+1) \rho \bm{u}\bm{\sigma}\right)\big] +\frac{1}{2}\brac{\nabla\bm{u} + (\nabla\bm{u})^T}=- \frac{\rho\bm{\sigma}}{\nu}, \\[2mm]
&\alpha_2 \big[ \partial_t (\rho k) + \nabla \cdot (\rho k\bm{u}) \big] + \frac{2}{3} \nabla\cdot \bm{u} + \frac{ \xi}{\alpha_3} \nabla \cdot \bm{y} = \frac{2 \beta\rho\nu_T}{ \nu^2} \bm{\sigma} : \bm{\sigma} - \beta C_D \frac{\rho k^{3/2}}{l}, \\[2mm]
&\partial_t (\rho \bm{y}) + \nabla \cdot (\rho\bm{u}\otimes \bm{y}) + \xi \nabla k = - \frac{\rho\bm{y}}{\alpha_3(\nu + \nu_T)}.
\end{aligned}
\end{equation}
Here the bars and tildes in \eqref{equ:key-equ} and \eqref{equ:sht-model} have been omitted.

As in \cite{feireisl2007,alazard2006low}, we let $\varepsilon$ be a small positive parameter characterising the Mach number and define
\begin{equation}\nonumber
	\begin{aligned}
		&\rho^\varepsilon(x, \tau )=\rho(x, \frac{\tau}{\varepsilon}),\quad \bm{u}^\varepsilon(x, \tau) =\frac{1}{\varepsilon}\bm{u}(x, \frac{\tau}{\varepsilon}), \\[2mm]
		&k^\varepsilon(x, \tau) =\frac{1}{\varepsilon^2} k(x, \frac{\tau}{\varepsilon}), \quad \bm{y}^\varepsilon(x, \tau) = \frac{1}{\sqrt{\varepsilon}}\bm{y}(x, \frac{\tau}{\varepsilon}),\quad \tilde{\bm{\sigma}}^\varepsilon(x, \tau) =\frac{1}{\varepsilon^{3/2}}\bm{\sigma}(x, \frac{\tau}{\varepsilon}).
	\end{aligned}
\end{equation}
Correspondingly, we rescale the parameters in \eqref{equ:full-equ} as
\begin{equation}\nonumber
	\begin{aligned}
	&\nu_T^\varepsilon \equiv l\sqrt{k^\varepsilon} = \frac{l\sqrt{k}}{\varepsilon}\equiv \frac{\nu_T}{\varepsilon},\quad \bar{\nu}=\frac{\nu}{\varepsilon}, \quad \mu_T^\varepsilon \equiv \frac{\nu_T^\varepsilon}{\bar{\nu}} = \mu_T, \\[2mm]
	&\bar{\alpha}_1 = \varepsilon\alpha_1,\quad \bar{\alpha}_2 = \varepsilon^2\alpha_2,\quad \bar{\alpha}_3 = \varepsilon\alpha_3,\\[2mm]
	&\bar{\xi} = \varepsilon \xi, \quad \bar{\beta}=\varepsilon^2 \beta.
	\end{aligned}
\end{equation}
With such a scaling, the model can be rewritten as 
\begin{equation}\nonumber
\begin{aligned}
& \partial_\tau \rho^\varepsilon +\nabla \cdot (\rho^\varepsilon \bm{u}^\varepsilon )=0, \\[2mm]
& \partial_\tau \brac{\rho^\varepsilon\bm{u}^\varepsilon} + \nabla \cdot \brac{\rho^\varepsilon\bm{u}^\varepsilon \otimes \bm{u}^\varepsilon}+ \frac{1}{\sqrt{\varepsilon}} \nabla \cdot \brac{ \brac{\mu_T^\varepsilon +1} \bm{\tilde{\sigma}}^\varepsilon } + \frac{1}{\varepsilon^2} \nabla p^\varepsilon + \frac{2}{3} \nabla k^\varepsilon =0, \\[2mm]
& \bar{\alpha}_1 \big[\partial_\tau\left( (\mu_T^\varepsilon+1) \rho^\varepsilon\bm{\tilde{\sigma}}^\varepsilon\right)+\nabla \cdot\left( (\mu_T^\varepsilon+1) \rho^\varepsilon\bm{u}^\varepsilon  \bm{\tilde{\sigma}}^\varepsilon\right) \big] + \frac{1}{2\sqrt{\varepsilon}}\brac{\nabla\bm{u}^\varepsilon + (\nabla\bm{u}^\varepsilon)^T} = -\frac{\rho^\varepsilon\bm{\tilde{\sigma}}^\varepsilon}{\varepsilon\bar{\nu}}, \\[2mm]
&\bar{\alpha}_2\big[\partial_\tau (\rho^\varepsilon k^\varepsilon) + \nabla \cdot (\rho^\varepsilon \bm{u}^\varepsilon k^\varepsilon) \big]+ \frac{2}{3}\nabla\cdot \bm{u}^\varepsilon + \frac{\bar{\xi}}{\sqrt{\varepsilon}\bar{\alpha}_3} \nabla \cdot \bm{y}^\varepsilon =  \frac{2\bar{\beta}\rho^\varepsilon \nu_T^\varepsilon}{\varepsilon\bar{\nu}^2} \bm{\tilde{\sigma}}^\varepsilon : \bm{\tilde{\sigma}}^\varepsilon -\bar{\beta} C_D \frac{\rho^\varepsilon(k^\varepsilon)^{3/2}}{l}, \\[2mm]
&\partial_\tau (\rho^\varepsilon \bm{y}^\varepsilon) + \nabla \cdot (\rho^\varepsilon\bm{u}^\varepsilon\otimes \bm{y}^\varepsilon) +\frac{\bar{\xi}}{\sqrt{\varepsilon}}  \nabla k^\varepsilon = -\frac{\rho^\varepsilon\bm{y}^\varepsilon}{\varepsilon\bar{\alpha}_3(\nu_T^\varepsilon + \bar{\nu})}. 
\end{aligned}
\end{equation}

Following \cite{majda1984compressible,yong2005note}, we introduce
\begin{equation}\nonumber
	\phi^\varepsilon=\left(p^\varepsilon-p_0 \right) / \varepsilon
\end{equation}
with $p_0=p(\rho_0)>0$. Since $p = p(\rho)$ is strictly increasing, it has an inverse $\rho=\rho(p)$. Set $q(p)=\left[\rho(p) p^{\prime}(\rho(p))\right]^{-1}$.
Moreover, we set $\bm{\sigma}^\varepsilon =(\mu_T+1) \bm{\tilde{\sigma}}^\varepsilon$ and omit bars in the last system.
Thus, the last system can be rewritten as
\begin{equation}\label{equ:4-4-fullequ}
\begin{aligned}
& \partial_\tau \phi^\varepsilon + \bm{u}^\varepsilon \cdot\nabla \phi^\varepsilon  +  \frac{1}{\varepsilon q^\varepsilon} \nabla \cdot \bm{u}^\varepsilon =0, \\[2mm]
&\partial_\tau \bm{u}^\varepsilon + \bm{u}^\varepsilon \cdot \nabla\bm{u}^\varepsilon + \frac{1}{\sqrt{\varepsilon} \rho^\varepsilon }\nabla\cdot \bm{\sigma}^\varepsilon + \frac{1}{\varepsilon \rho^\varepsilon} \nabla \phi^\varepsilon  + \frac{2}{3 \rho^\varepsilon} \nabla k^\varepsilon =0, \\[2mm]
&\partial_\tau \bm{\sigma}^\varepsilon +\bm{u}^\varepsilon \cdot \nabla \bm{\sigma}^\varepsilon + \frac{1}{\sqrt{\varepsilon} \alpha_1\rho^\varepsilon} \frac{1}{2}\big(\nabla\bm{u}^\varepsilon + (\nabla\bm{u}^\varepsilon)^T\big) = -\frac{1}{\varepsilon  \alpha_1}\frac{\bm{\sigma}^\varepsilon}{\nu_T^\varepsilon + \nu}, \\[2mm]
&\partial_\tau k^\varepsilon + \bm{u}^\varepsilon \cdot \nabla k^\varepsilon + \frac{2}{3\alpha_2\rho^\varepsilon}\nabla\cdot \bm{u}^\varepsilon + \frac{\xi}{\sqrt{\varepsilon}\alpha_2\alpha_3\rho^\varepsilon} \nabla \cdot \bm{y}^\varepsilon = \frac{2 \beta}{\varepsilon\alpha_2}\frac{\nu_T^\varepsilon}{(\nu_T^\varepsilon + \nu)^2} \bm{\sigma}^\varepsilon : \bm{\sigma}^\varepsilon - \frac{\beta C_D}{\alpha_2 l}(k^\varepsilon)^{3/2}, \\[2mm]
&\partial_\tau \bm{y}^\varepsilon + \bm{u}^\varepsilon \cdot \nabla \bm{y}^\varepsilon + \frac{\xi}{\sqrt{\varepsilon} \rho^\varepsilon}\nabla k^\varepsilon = -\frac{1}{\varepsilon } \frac{\bm{y}^\varepsilon}{\alpha_3(\nu_T^\varepsilon + \nu)}.
\end{aligned}
\end{equation}
Here $q^\varepsilon = q(p_0 + \varepsilon \phi^\varepsilon)$ and $\rho^\varepsilon = \rho(p_0 + \varepsilon \phi^\varepsilon)$.

Without loss of generality, we write
\begin{equation}\nonumber
	\bm{\sigma}^\varepsilon=\begin{pmatrix}
		\sigma_{11} & \sigma_{12} & \sigma_{13}\\
		\sigma_{12} & \sigma_{22} & \sigma_{23}\\
		\sigma_{13} & \sigma_{23} & \sigma_{33}\\
	\end{pmatrix}
\end{equation} 
and set $\bm{U}^\varepsilon = (\phi^\varepsilon, ~ (\bm{u}^\varepsilon)^T, ~\sigma_{11},~\sigma_{12},~\sigma_{13},~\sigma_{22},~\sigma_{23},~\sigma_{33}, ~k^\varepsilon, ~(\bm{y}^\varepsilon)^T)^T$. 
Thus, we can rewrite \eqref{equ:4-4-fullequ} in the vector form
\begin{equation}\label{formal-full-equ-1}
	\bm{U}^\varepsilon_{\tau} + \sum_{j=1}^D \bm{A}_j(\bm{U}^\varepsilon)\bm{U}^\varepsilon_{x_j} = \bm{Q}(\bm{U}^\varepsilon).
\end{equation}
Here
\begin{equation}\label{equ:Aj-define}
\begin{aligned}
	\sum_{j=1}^3\bm{A}_j(\bm{U}^\varepsilon) \zeta_j &= (\bm{u}^\varepsilon \cdot \bm{\zeta}) \bm{I}_{14} + \begin{pmatrix}
		0 & \frac{1}{\varepsilon q^\varepsilon}\bm{\zeta} & \bm{0}_{1\times 6} & 0 & \bm{0}_{1\times 3}\\[2mm]
		\frac{1}{\varepsilon \rho^\varepsilon}\bm{\zeta}^T & \bm{0}_{3\times 3} & \frac{1}{\sqrt{\varepsilon}\rho^\varepsilon}\bm{C}_{3\times 6} & \frac{2}{3\rho^\varepsilon}\bm{\zeta}^T & \bm{0}_{3\times 3}\\[2mm]
		\bm{0}_{6\times 1} & \frac{1}{\sqrt{\varepsilon}\alpha_1\rho^\varepsilon}\bm{D}_{6\times 3} & \bm{0}_{6\times 6}  & \bm{0}_{6\times 1} & \bm{0}_{6\times 3}\\[2mm]
		0 & \frac{2}{3\alpha_2\rho^\varepsilon} \bm{\zeta} & \bm{0}_{1\times 6} & 0 & \frac{\xi}{\sqrt{\varepsilon}\alpha_2\alpha_3\rho^\varepsilon} \bm{\zeta}^T\\[2mm]
		\bm{0}_{3\times 1} & \bm{0}_{3\times 3} & \bm{0}_{3\times 6} & \frac{\xi}{\sqrt{\varepsilon}\rho^\varepsilon} \bm{\zeta}^T & \bm{0}_{3\times 3}
	\end{pmatrix}\\[4mm]
	&\equiv (\bm{u}^\varepsilon \cdot \bm{\zeta}) \bm{I}_{14} + \sum_{j=1}^3\bm{A}_j^{(1)} (\varepsilon \phi^\varepsilon) \zeta_j + \frac{1}{\sqrt{\varepsilon}}\sum_{j=1}^3\bm{A}_j^{(2)}(\varepsilon \phi^\varepsilon)\zeta_j + \frac{1}{\varepsilon}\sum_{j=1}^3\bm{A}_j^{(3)}(\varepsilon \phi^\varepsilon)\zeta_j 
\end{aligned}
\end{equation}
with $\bm{\zeta} = (\zeta_1, ~\zeta_2, ~\zeta_3)\in \mathcal{R}^3$, 
\begin{equation}\nonumber
\begin{aligned}
	\bm{C}_{3\times 6} = \begin{pmatrix}
		\zeta_1 & \zeta_2 & \zeta_3 & 0 & 0 & 0 \\
		0 & \zeta_1 & 0 & \zeta_2 & \zeta_3 & 0 \\
		0 & 0 & \zeta_1 & 0 & \zeta_2 & \zeta_3 \\
	\end{pmatrix}, 
	\quad \bm{D}_{6\times 3} = \begin{pmatrix}
		\zeta_1 & 0 & 0 \\[2mm]
		\frac{1}{2}\zeta_2 & \frac{1}{2}\zeta_1 & 0 \\[2mm]
		\frac{1}{2}\zeta_3 & 0 & \frac{1}{2}\zeta_1 \\[2mm]
		0 & \zeta_2 & 0 \\[2mm]
		0 & \frac{1}{2}\zeta_3 & \frac{1}{2}\zeta_2 \\[2mm]
		0 & 0 & \zeta_3 \\[2mm]
	\end{pmatrix},	
\end{aligned}
\end{equation}
 and
\begin{equation}\nonumber
\begin{aligned}
	\bm{Q}(\bm{U}^\varepsilon) = \begin{pmatrix}
		0 \\ \bm{0}_{3\times 1} \\[3mm] -\frac{1}{\varepsilon  \alpha_1}\frac{\bm{\sigma}^\varepsilon}{\nu_T^\varepsilon +\nu}\\[3mm] 
		\frac{2 \beta}{\varepsilon\alpha_2} \frac{\nu_T}{(\nu_T^\varepsilon+\nu)^2} \bm{\sigma}^\varepsilon : \bm{\sigma}^\varepsilon - \frac{\beta C_D}{\alpha_2 l} (k^\varepsilon)^{3/2}\\[3mm]
		-\frac{1}{\varepsilon \alpha_3} \frac{\bm{y}^\varepsilon}{\nu_T^\varepsilon+\nu}
	\end{pmatrix}
\end{aligned}
\end{equation}
with $\bm{\sigma}^\varepsilon$ in the third row of $\bm{Q}(\bm{U}^\varepsilon)$ being $(\sigma_{11},~\sigma_{12},~\sigma_{13},~\sigma_{22},~\sigma_{23},~\sigma_{33})^T$.

Suppose that, as $\varepsilon$ goes to zero, the limits
\begin{equation}\nonumber
	\phi^\varepsilon /\varepsilon \rightarrow \pi, \quad\bm{u}^\varepsilon\rightarrow \bm{u}, \quad\bm{\sigma}^\varepsilon/\sqrt{\varepsilon} \rightarrow \bm{\sigma} , \quad k^\varepsilon\rightarrow k,\quad \bm{y}^\varepsilon/\sqrt{\varepsilon} \rightarrow \bm{y}
\end{equation}
exist. We can easily see the following equations from the rescaled hyperbolic turbulence model \eqref{equ:4-4-fullequ} 
\begin{equation}\label{equ:rans-prandtl-one}
\begin{aligned}
& \nabla \cdot \bm{u} =0, \\
& \rho_0(\partial_\tau \bm{u} + \bm{u}\cdot \nabla \bm{u}) + \nabla \cdot \bm{\sigma} + \nabla \pi +  \frac{2}{3}\nabla k =0, \\[2mm]
&\rho_0 \bm{\sigma} = -\frac{1}{2}(\nu_T+\nu)\big(\nabla\bm{u} + (\nabla\bm{u})^T\big),\\
&\partial_\tau  k + \bm{u}\cdot \nabla k  + \frac{\xi}{\alpha_2\alpha_3\rho_0} \nabla \cdot \bm{y} =\frac{2 \beta}{\alpha_2} \frac{\nu_T}{(\nu_T+\nu)^2} \bm{\sigma} : \bm{\sigma} - \frac{\beta C_D}{\alpha_2 l}  (k)^{3/2}, \\[2mm]
&\rho_0\bm{y} = -\xi\alpha_3(\nu_T+\nu) \nabla k.
\end{aligned}
\end{equation}
With $\rho_0=1$, $\alpha_2=\beta =\xi^2$, and $\alpha_3$ being any positive constant, the last system of equations is just the incompressible Reynolds-averaged Navier–Stokes equations coupled with Prandtl's one equation \eqref{equ:k-equ-prandtl}.

\section{Rigorous justification}\label{sec:6-justification}
In this section, we justify the formal derivation of the classical one-equation turbulence model \eqref{equ:rans-prandtl-one} from the rescaled hyperbolic turbulence model \eqref{formal-full-equ-1}.
Notice that the rescaled model is a symmetrizable hyperbolic system with symmetrizer
\begin{equation}\label{defi:ao-symmetrizer}
	\bm{A}_0(\bm{U}^\varepsilon) = \diag \{ \frac{q^\varepsilon}{\rho^\varepsilon}, ~\bm{I}_3, ~ \alpha_1 \bm{\bar{A}}_0, ~\alpha_2, ~\frac{1}{\alpha_3}\bm{I}_3 \}, \quad \bm{\bar{A}}_0 = \diag\{1, ~ 2, ~ 2, ~1, ~2, ~ 1\}.
\end{equation}
Thus, the local existence theory for initial-value problems of symmetrizable hyperbolic systems (see Theorem 2.1 in \cite{majda1984compressible}) is applicable.

To proceed, we introduce the following notations.
\begin{notation}
	$\abs{\bm{U}}$ denotes some norm of a vector or matrix $\bm{U}$. For a nonnegative integer $k$, $H^k =H^k(\Omega)$ denotes the usual $L^2$-type Sobolev space of order $k$, with $\Omega=\mathbb{R}^3$ or the 3-dimensional torus. We write $\|\cdot\|_k$ for the standard norm of $H^k$ and $\|\cdot\|$ for $\|\cdot\|_0$. When $\bm{U}$ is a function of another variable $t$ as well as $x$, we write $\norm{\bm{U}(\cdot, t)}_k$ to recall that the norm is taken with respect to $x$ while $t$ is viewed as a parameter. In addition, we denote by $C([0, T], \mathbf{X})$ (resp. $\left.C^1([0, T], \mathbf{X})\right)$ the space of continuous (resp. continuously differentiable) functions on $[0, T]$ with values in a Banach space $\mathbf{X}$.
\end{notation}

Next, we denote by $G\subset \mathbb{R}\times\mathbb{R}^3\times\mathbb{R}^6\times\mathbb{R}_{+}\times\mathbb{R}^3$ the physical state space for our turbulence model \eqref{formal-full-equ-1}.
Assume $G_0$ is a convex compact subset of $G$,
 \begin{equation}
 	\bm{U}_0(x, \varepsilon)=(\phi_0(x, \varepsilon), \bm{u}_0(x, \varepsilon), \bm{\sigma}_0(x, \varepsilon), k_0(x, \varepsilon), \bm{y}_0(x, \varepsilon))\in G_0, \quad (x,\varepsilon) \in \Omega \times (0,1],
 \end{equation}
and $\bm{U}_0(\cdot, \varepsilon) \in H^s$ with $s>3/2+1$ an integer. 
Fix $\varepsilon$. For each convex open subset $G_1$ satisfying $G_0 \subset \subset G_1 \subset \subset G$, according to the local existence theory (Theorem 2.1 in \cite{majda1984compressible}) there exists $T>0$ so that the hyperbolic system \eqref{formal-full-equ-1} with initial data $\bm{U}_0(\cdot, \varepsilon)$ has a unique classical solution
\begin{equation}\nonumber
	\bm{U}^\varepsilon \in C\left([0, T], H^s\right) \quad \text { and } \quad \bm{U}^\varepsilon(x, t) \in G_1 \quad \forall(x, t) \in \Omega \times[0, T] .
\end{equation}
Define
\begin{equation}\nonumber
	T_\varepsilon=\sup \left\{T>0: \bm{U}^\varepsilon \in C\left([0, T], H^s\right), \quad \bm{U}^\varepsilon(x, t) \in G_1, \quad \forall(x, t) \in \Omega \times[0, T]\right\}.
\end{equation}
Namely, $\left[0, T_\varepsilon\right)$ is the maximal time interval of $H^s$ existence. Note that $T_\varepsilon$ depends on $G_1$ and may tend to zero as $\varepsilon$ goes to $0$.

To show that $T_\varepsilon$ has a positive lower bound, we refer to \cite{yong2005note} and introduce $\bm{U}_\varepsilon = (\phi_\varepsilon, ~ \bm{u}_\varepsilon, ~\bm{\sigma}_\varepsilon, ~k_\varepsilon, ~\bm{y}_\varepsilon)^T$ with 
\begin{equation}\label{equ:defin-u_epsilon}
	\phi_\varepsilon = \varepsilon \pi,\quad \bm{u}_\varepsilon = \bm{u},
	\quad \bm{\sigma}_\varepsilon = -\frac{\sqrt{\varepsilon}}{2 \rho_0}( l\sqrt{k} +\nu)\big(\nabla\bm{u} + (\nabla\bm{u})^T\big), \quad k_\varepsilon = k, \quad  \bm{y}_\varepsilon = - \frac{\sqrt{\varepsilon}\xi \alpha_3}{\rho_0}(l\sqrt{k}+\nu) \nabla k,
\end{equation}
where $(\bm{u}, ~\pi, ~k)$ is a smooth solution to the incompressible turbulence model \eqref{equ:rans-prandtl-one}. It is easy to verify that $\bm{U}_\varepsilon = (\phi_\varepsilon, ~ \bm{u}_\varepsilon, ~\bm{\sigma}_\varepsilon, ~k_\varepsilon, ~\bm{y}_\varepsilon)^T$ satisfies \eqref{equ:rans-prandtl-one} with remainders:
\begin{equation}\label{equ:4-4-fullequ_var}
\begin{aligned}
& \partial_t \phi_\varepsilon + \bm{u}_\varepsilon \cdot\nabla \phi_\varepsilon +  \frac{1}{\varepsilon q_\varepsilon} \nabla \cdot \bm{u}_\varepsilon ={} \varepsilon (\partial_t \pi + \bm{u} \cdot\nabla \pi)=:f_1  , \\[4mm]
&\partial_t \bm{u}_\varepsilon + \bm{u}_\varepsilon \cdot \nabla\bm{u}_\varepsilon + \frac{1}{\sqrt{\varepsilon}\rho_\varepsilon}\nabla\cdot \bm{\sigma}_\varepsilon + \frac{1}{\varepsilon \rho_\varepsilon} \nabla \phi_\varepsilon  + \frac{2}{3\rho_\varepsilon} \nabla k_\varepsilon ={}\brac{1-\frac{\rho_0}{\rho_\varepsilon}}(\partial_t \bm{u} + \bm{u} \cdot\nabla \bm{u}) =:f_2, \\[4mm]
&\partial_t \bm{\sigma}_\varepsilon +\bm{u}_\varepsilon \cdot \nabla \bm{\sigma}_\varepsilon + \frac{1}{\sqrt{\varepsilon} \alpha_1\rho_\varepsilon } \frac{1}{2}\bigg(\nabla\bm{u}_\varepsilon + (\nabla\bm{u}_\varepsilon)^T\bigg) +\frac{1}{\varepsilon \alpha_1}\frac{\bm{\sigma}_\varepsilon}{\nu_T +\nu} \\[2mm]
& \quad ={}-(\partial_t  +\bm{u}\cdot \nabla) \frac{\sqrt{\varepsilon}}{2\rho_0}(l\sqrt{k}+\nu)\big(\nabla\bm{u} + (\nabla\bm{u})^T\big) - \frac{1}{\sqrt{\varepsilon} \alpha_1}\brac{1-\frac{\rho_0}{\rho_\varepsilon}}\frac{1}{2 \rho_0}\big(\nabla\bm{u} + (\nabla\bm{u})^T\big)  =: f_3, \\[4mm]
&\partial_t k_\varepsilon + \bm{u}_\varepsilon \cdot \nabla k_\varepsilon +  \frac{2}{3\alpha_2\rho_\varepsilon}\nabla\cdot \bm{u}_\varepsilon  + \frac{\xi}{\sqrt{\varepsilon}\alpha_2\alpha_3\rho_\varepsilon} \nabla \cdot \bm{y}_\varepsilon 
- \frac{2\beta}{\varepsilon\alpha_2} \frac{\nu_T}{(\nu_T+\nu)^2} \bm{\sigma}_\varepsilon : \bm{\sigma}_\varepsilon 
+ \frac{\beta C_D}{\alpha_2} \frac{(k_\varepsilon)^{3/2}}{l} \\[2mm]
& \quad ={}\frac{1}{\rho_0} \brac{1-\frac{\rho_0}{\rho_\varepsilon}}\brac{\frac{\xi^2}{\alpha_2\rho_0} \nabla \cdot \brac{(\nu_T+\nu) \nabla k }} =:f_4, \\[4mm]
&\partial_t \bm{y}_\varepsilon + \bm{u}_\varepsilon \cdot \nabla \bm{y}_\varepsilon + \frac{\xi}{\sqrt{\varepsilon}\rho_\varepsilon}\nabla k_\varepsilon + \frac{1}{\varepsilon} \frac{\bm{y}_\varepsilon}{\alpha_3(\nu_T+\nu)} \\[2mm]
&\quad ={}-\sqrt{\varepsilon}(\partial_t + \bm{u} \cdot \nabla ) (\frac{\xi \alpha_3}{\rho_0} (\nu_T+\nu) \nabla k ) - \frac{\xi}{\sqrt{\varepsilon}}\brac{1-\frac{\rho_0}{\rho_\varepsilon}}\frac{\nabla k}{\rho_0}=: f_5.
\end{aligned}
\end{equation}
Here $q_\varepsilon = q(p_0 + \varepsilon^2 \pi)$ and $\rho_\varepsilon = \rho(p_0 + \varepsilon^2 \pi)$.
Namely, $\bm{U}_\varepsilon = (\phi_\varepsilon, ~ \bm{u}_\varepsilon, ~\bm{\sigma}_\varepsilon, ~k_\varepsilon, ~\bm{y}_\varepsilon)^T$ satisfies
\begin{equation}\label{equ:formal-limit-system}
	\begin{aligned}
		\bm{U}_{\varepsilon t} + \sum_{j=1}^3 \bm{A}_j(\bm{U}_\varepsilon)\bm{U}_{\varepsilon x_j} \equiv \bm{Q}(\bm{U}_\varepsilon) +  \bm{R}
	\end{aligned}
\end{equation}
with $\bm{R} = (f_1, ~f_2, ~f_3, ~f_4, ~f_5)^T$.

We are now ready to state our compatibility result.
\begin{theorem}
	Let $s>3/2+1$ be an integer. Suppose initial data $\bm{U}_0(x,\varepsilon)=(\phi_0(x, \varepsilon), \bm{u}_0(x, \varepsilon), \bm{\sigma}_0(x, \varepsilon), k_0(x, \varepsilon), \bm{y}_0(x, \varepsilon))$ belong to $H^s$ and satisfy
	\begin{equation}\nonumber
		\begin{aligned}
			&\norm{\brac{\phi_0(x, \varepsilon), ~\bm{u}_0(x, \varepsilon) - \bm{u}_0(x), ~k_0(x, \varepsilon)-k_0(x) }}_s = O(\varepsilon),\\[2mm]
			&\norm{\brac{\rho_0 \bm{\sigma}_0(x, \varepsilon)+\frac{\sqrt{\varepsilon}}{2}(l\sqrt{k_0}+\nu)\brac{\nabla\bm{u}_0 + (\nabla\bm{u}_0)^T}, ~\rho_0 \bm{y}_0(x, \varepsilon) +\sqrt{\varepsilon}\xi \alpha_3(l\sqrt{k_0}+\nu) \nabla k_0 }}_s = O(\varepsilon).
		\end{aligned}
	\end{equation}
	Let $(\bm{u}, ~\pi, ~k)$ be the smooth solution to system \eqref{equ:rans-prandtl-one} with initial data $\bm{w}(x, 0)=\bm{u}_0(x)$, $k(x, 0)=k_0(x)$ on $[0, T_\star]\times G$ and we assume
	\begin{equation}\nonumber
		(\bm{u}, ~\pi, ~k) \in C\left(\left[0, T_*\right], H^{s+1}\right) \cap C^1\left(\left[0, T_*\right], H^s\right).
	\end{equation}
	Then there exists a constant $\varepsilon_0>0$, such that, for all $\varepsilon < \varepsilon_0$, the hyperbolic turbulence system \eqref{formal-full-equ-1} with initial data $\bm{U}_0 (x, \varepsilon)$ has a unique smooth solution $\bm{U}^\varepsilon= (\phi^\varepsilon, ~ \bm{u}^\varepsilon, ~\bm{\sigma}^\varepsilon, ~k^\varepsilon, ~\bm{y}^\varepsilon)$. Moreover, there exists a positive constant $K>0$, independent of $\varepsilon$, such that, for all $\varepsilon\leq \varepsilon_0$,
		\begin{equation}\label{err:th1-err}
		\begin{aligned}
			&\norm{\brac{\phi^\varepsilon -\varepsilon \pi , ~\bm{u}^\varepsilon - \bm{u}, ~k^\varepsilon-k }}_s \leq K \varepsilon, \\[2mm]
			&\norm{\brac{\rho_0 \bm{\sigma}^\varepsilon+\frac{\sqrt{\varepsilon}}{2}(l\sqrt{k}+\nu)\big(\nabla\bm{u} + (\nabla\bm{u})^T\big),~\rho_0 \bm{y}^\varepsilon+\sqrt{\varepsilon}\xi \alpha_3(l\sqrt{k}+\nu) \nabla k}}_s \leq K  \varepsilon
		\end{aligned}
	\end{equation}
for $t\in [0, T_\star)$.
\end{theorem}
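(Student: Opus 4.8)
The plan is to treat the statement as a singular (low Mach number) limit for a symmetrizable hyperbolic system with stiff relaxation, following the energy method of \cite{yong2005note} built on the local existence theory of \cite{majda1984compressible}. The object to estimate is the error $\bm{E}^\varepsilon := \bm{U}^\varepsilon - \bm{U}_\varepsilon$ between the exact solution of \eqref{formal-full-equ-1} and the approximate solution $\bm{U}_\varepsilon$ of \eqref{equ:defin-u_epsilon}, which by \eqref{equ:formal-limit-system} solves the same system up to the remainder $\bm{R}=(f_1,\dots,f_5)^T$. Subtracting \eqref{equ:formal-limit-system} from \eqref{formal-full-equ-1} gives
\[
\bm{E}^\varepsilon_\tau + \sum_{j=1}^3 \bm{A}_j(\bm{U}^\varepsilon)\bm{E}^\varepsilon_{x_j} = \brac{\bm{Q}(\bm{U}^\varepsilon)-\bm{Q}(\bm{U}_\varepsilon)} - \sum_{j=1}^3\brac{\bm{A}_j(\bm{U}^\varepsilon)-\bm{A}_j(\bm{U}_\varepsilon)}\bm{U}_{\varepsilon x_j} - \bm{R}.
\]
The whole proof reduces to a uniform-in-$\varepsilon$ a priori bound $\norm{\bm{E}^\varepsilon(\cdot,\tau)}_s\leq K\varepsilon$ on $[0,T_\star)$, after which the continuation criterion of the local theory forces $T_\varepsilon\geq T_\star$.

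For the a priori estimate I would apply $\partial_x^\alpha$ for $\abs{\alpha}\leq s$, pair with $\bm{A}_0(\bm{U}^\varepsilon)\partial_x^\alpha\bm{E}^\varepsilon$ using the symmetrizer \eqref{defi:ao-symmetrizer}, and integrate over $\Omega$. Three structural features carry the argument. First, since $\bm{A}_0\bm{A}_j$ is symmetric, integration by parts in the convection term leaves only $\partial_{x_j}(\bm{A}_0\bm{A}_j)$; the singular pieces $\varepsilon^{-1}\bm{A}_j^{(3)}$ and $\varepsilon^{-1/2}\bm{A}_j^{(2)}$ from \eqref{equ:Aj-define} have coefficients depending on the state only through $q^\varepsilon=q(p_0+\varepsilon\phi^\varepsilon)$ and $\rho^\varepsilon=\rho(p_0+\varepsilon\phi^\varepsilon)$, so each spatial derivative produces a compensating factor $\varepsilon\,\partial_{x_j}\phi^\varepsilon$ that cancels the singularity and renders these contributions bounded uniformly in $\varepsilon$. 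Second, because $\bm{Q}=\bm{M}\eta_{\bm v}$ with $\bm{M}$ positive definite (the conservation--dissipation principle of Section \ref{sec3:CDF}), the symmetric part of $\bm{A}_0\bm{Q}_{\bm U}$ is negative definite on the non-equilibrium block $(\bm{\sigma}^\varepsilon,\bm{y}^\varepsilon)$, here denoted $\bm{E}^{\mathrm{II}}$, with strength $\order(1/\varepsilon)$, producing a good dissipative term $-c\varepsilon^{-1}\norm{\partial_x^\alpha\bm{E}^{\mathrm{II}}}^2$. Third, the nonlinear differences $\bm{Q}(\bm{U}^\varepsilon)-\bm{Q}(\bm{U}_\varepsilon)$ and $(\bm{A}_j(\bm{U}^\varepsilon)-\bm{A}_j(\bm{U}_\varepsilon))\bm{U}_{\varepsilon x_j}$, together with the commutators $[\partial_x^\alpha,\bm{A}_j]$, are controlled by Moser-type and Kato--Ponce estimates, each bounded by $C\norm{\bm{E}^\varepsilon}_s$ times Sobolev norms of $\bm{U}_\varepsilon$ that stay uniformly bounded thanks to the assumed regularity $(\bm u,\pi,k)\in C([0,T_\star],H^{s+1})\cap C^1([0,T_\star],H^s)$.

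The remainder $\bm{R}$ splits according to these two blocks. In the equilibrium block the remainders $f_1,f_2,f_4$ are $\order(\varepsilon)$ (indeed $1-\rho_0/\rho_\varepsilon=\order(\varepsilon^2)$, so $f_2,f_4=\order(\varepsilon^2)$), while in the non-equilibrium block $f_3,f_5$ are only $\order(\sqrt{\varepsilon})$. The point is that the $\order(1/\varepsilon)$ damping from $\bm{M}$ absorbs an $\order(\sqrt{\varepsilon})$ forcing into an $\order(\varepsilon^{3/2})$ contribution to $\bm{E}^{\mathrm{II}}$, which feeds back into the equilibrium block through the $\varepsilon^{-1/2}$ coupling as an $\order(\varepsilon)$ term --- exactly consistent with the claimed rate. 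Collecting everything yields a differential inequality
\[
\frac{\diff}{\diff\tau}\norm{\bm{E}^\varepsilon}_s^2 + \frac{c}{\varepsilon}\norm{\bm{E}^{\mathrm{II}}}_s^2 \leq C\brac{\norm{\bm{E}^\varepsilon}_s^2 + \varepsilon^2},
\]
with $C$ independent of $\varepsilon$ as long as $\bm{U}^\varepsilon$ remains in a fixed compact neighborhood of the range of $\bm{U}_\varepsilon$. Gronwall's inequality and the $\order(\varepsilon)$ bound on the initial data then give $\norm{\bm{E}^\varepsilon(\cdot,\tau)}_s\leq K\varepsilon$. This bound keeps $\bm{U}^\varepsilon$ strictly inside the compactly contained set $G_1$ used to define $T_\varepsilon$, so the solution cannot exit $G_1$ before $T_\star$, which forces $T_\varepsilon\geq T_\star$ for all $\varepsilon<\varepsilon_0$ and closes the argument.

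I expect the main obstacle to be the simultaneous, uniform control of the two singular convection scales $\varepsilon^{-1}\bm{A}_j^{(3)}$ and $\varepsilon^{-1/2}\bm{A}_j^{(2)}$ together with the two distinct error scales --- $\order(\varepsilon)$ in the equilibrium variables $(\phi,\bm u,k)$ versus the more strongly damped $\order(\varepsilon^{3/2})$ stress--flux variables $(\bm\sigma,\bm y)$ --- so that the cross-coupling between the blocks does not degrade the overall $\order(\varepsilon)$ rate. The resolution rests on the symmetrizer \eqref{defi:ao-symmetrizer} symmetrizing all $\bm{A}_j$ while respecting this block structure, which is precisely what the conservation--dissipation principle guarantees; verifying that the compensating $\varepsilon$-factors and the $\bm{M}$-dissipation combine to actually close the estimate, rather than merely bounding each term crudely, is the delicate bookkeeping at the core of the proof.
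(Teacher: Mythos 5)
Your overall strategy coincides with the paper's: form the error between the exact solution and the corrected profile \eqref{equ:defin-u_epsilon}, apply $\partial^\alpha$, pair with the symmetrizer \eqref{defi:ao-symmetrizer}, exploit the cancellation of the singular convection blocks through the $\varepsilon\phi^\varepsilon$-dependence of the coefficients, extract $\order(1/\varepsilon)$ dissipation on $(\bm{\sigma},\bm{y})$ from the relaxation terms, absorb the $\order(\sqrt{\varepsilon})$ remainders $f_3,f_5$ into that dissipation, and conclude by Gronwall. The gap is that the two places where you declare the estimate closed are exactly where the paper must work harder. The relaxation coefficients are state-dependent: the damping is $\bm{\sigma}^\varepsilon/\big(\varepsilon\alpha_1(l\sqrt{k^\varepsilon}+\nu)\big)$, so applying $\partial^\alpha$ to the source difference produces, besides the good term $-\tfrac{c}{\varepsilon}\norm{\partial^\alpha(\bm{\sigma}_\varepsilon-\bm{\sigma}^\varepsilon)}^2$, commutator terms of the form $\tfrac{C}{\varepsilon}\norm{\bm{\sigma}_\varepsilon-\bm{\sigma}^\varepsilon}_{\abs{\alpha}-1}^2$ (and likewise for $\bm{y}$). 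These carry the full $1/\varepsilon$ singularity at one derivative lower and cannot be absorbed into the dissipation at order $\abs{\alpha}$; the paper removes them by integrating in time and iterating over the derivative order $k=0,1,\dots,s$, using the time-integrated dissipation at order $k-1$ to control the offending term at order $k$. Your proposed inequality $\tfrac{\diff}{\diff\tau}\norm{\bm{E}}_s^2+\tfrac{c}{\varepsilon}\norm{\bm{E}^{\mathrm{II}}}_s^2\leq C\brac{\norm{\bm{E}}_s^2+\varepsilon^2}$ silently assumes this has already been done.

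Moreover, the constant $C$ is not uniform in the way you assert. The quadratic production term $\tfrac{2\beta}{\varepsilon\alpha_2}\tfrac{\nu_T^\varepsilon}{(\nu_T^\varepsilon+\nu)^2}\bm{\sigma}^\varepsilon:\bm{\sigma}^\varepsilon$ must be differenced as a function of $\bm{\sigma}/\sqrt{\varepsilon}$, and since only $\bm{\sigma}_\varepsilon=\order(\sqrt{\varepsilon})$ is known a priori, the Moser-type estimates yield coefficients growing like powers of $\Delta(t)=\norm{\bm{E}(t)}_s/\sqrt{\varepsilon}$. Remaining in a fixed compact subset $G_1$ of the state space bounds $\norm{\bm{E}}_{L^\infty}$ but not $\Delta(t)$, so your continuation argument does not control these coefficients. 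The resulting Gronwall inequality is genuinely nonlinear, $\Phi'\leq C\Phi+C\Phi^{1+3s/2}$, and closing it requires the nonlinear Gronwall-type lemma of \cite{yong1999singular} together with the smallness condition on $\varepsilon$ that is the actual origin of $\varepsilon_0$; linear Gronwall plus membership in $G_1$ does not suffice. Both issues are repairable, but they are essential steps of the proof rather than bookkeeping.
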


\begin{proof}

Following \cite{yong2005note} (see Theorem 2.2), it suffices to show that the error estimate \eqref{err:th1-err} for $t\in [0, \min\{T_\star, T_\varepsilon\})$. 
In this time interval, both $\bm{U}^\varepsilon$ and $\bm{U}_\varepsilon$ are regular enough and take values in a convex compact subset of the state space.

For the estimate, we set $\bm{E}=\bm{U}_\varepsilon-\bm{U}^\varepsilon$ and derive from \eqref{formal-full-equ-1} and \eqref{equ:formal-limit-system} that
\begin{equation}\nonumber
	\bm{E}_t+\sum_{j=1}^3 \bm{A}_j(\bm{U}^\varepsilon) \bm{E}_{x_j}=\sum_{j=1}^3 \big(\bm{A}_j(\bm{U}^\varepsilon)-\bm{A}_j(\bm{U}_\varepsilon) \big) \bm{U}_{\varepsilon x_j} + \bm{Q}(\bm{U}_\varepsilon) - \bm{Q}(\bm{U}^\varepsilon) + \bm{R}. 
\end{equation}
Differentiating this equation with $\partial^\alpha$ for any multi-index $\alpha$ satisfying $|\alpha| \leq s$ and setting $\bm{E}_\alpha=\partial^\alpha \bm{E}$, we get
\begin{equation}\label{err-equ-1}
	\bm{E}_{\alpha t}+\sum_{j=1}^3 \bm{A}_j (\bm{U}^\varepsilon) \bm{E}_{\alpha x_j}= \bm{F}_1^\alpha + \bm{F}_2^\alpha + \bm{G}^\alpha + \bm{R}_\alpha 
\end{equation}
with 
\begin{equation}\nonumber
	\begin{aligned}
		\bm{F}_1^\alpha = &{} \left\{\sum_{j=1}^3 \left[\bm{A}_j(\bm{U}^\varepsilon)-\bm{A}_j(\bm{U}_\varepsilon)\right] \bm{U}_{\varepsilon x_j}\right\}_\alpha, \\
		\bm{F}_2^\alpha = &{} \sum_{j=1}^3 \left\{\bm{A}_j(\bm{U}^\varepsilon) \bm{E}_{\alpha x_j}-\left[\bm{A}_j(\bm{U}^\varepsilon) \bm{E}_{x_j}\right]_\alpha\right\}, \\[2mm]
		\bm{G}^\alpha = &{} \left\{ \bm{Q}(\bm{U}_\varepsilon) - \bm{Q}(\bm{U}^\varepsilon) \right\}_\alpha.
	\end{aligned}
\end{equation}
Recall that $\bm{A}_0(\bm{U}^\varepsilon)$ and $\bm{A}_0(\bm{U}^\varepsilon) \bm{A}_j(\bm{U}^\varepsilon)$ are all symmetric.
We multiply the last equation by $2\bm{E}_{\alpha}^T \bm{A}_0(\bm{U}^\varepsilon)$ from the left and integrate the resulting equation over setting $x\in \mathbb{R}^3$ to obtain
\begin{equation}\label{err-equ-2}
	\begin{aligned}
		\brac{ \int \bm{E}_{\alpha}^T\bm{A}_0(\bm{U}^\varepsilon)\bm{E}_{\alpha} \diff x }_t={} &2 \int \bm{E}_{\alpha}^T \bm{A}_0(\bm{U}^\varepsilon)\brac{ \bm{F}_1^\alpha + \bm{F}_2^\alpha + \bm{G}^\alpha + \bm{R}_\alpha} \diff x \\[2mm]
		&+ \int  \bm{E}_{\alpha}^T \left \{ \frac{\partial }{\partial t}\bm{A}_0(\bm{U}^\varepsilon) + \sum_{j=1}^3 \frac{\partial }{\partial x_j} \brac{\bm{A}_0(\bm{U}^\varepsilon) \bm{A}_j(\bm{U}^\varepsilon)} \right \}\bm{E}_{\alpha} \diff x.
	\end{aligned}
\end{equation}

Now we estimate the terms in \eqref{err-equ-2}.  
From \eqref{defi:ao-symmetrizer} we see that $\bm{A}_0(\bm{U}^\varepsilon)=\bm{A}_0(\varepsilon \phi^\varepsilon)$ depends only on $\phi^\varepsilon$. Moreover, from \eqref{equ:Aj-define} we know that $\bm{A}_j(\bm{U}^\varepsilon)=u_j^\varepsilon \bm{I}_{14} + \bm{A}^{(1)}_j(\varepsilon \phi^\varepsilon) + \frac{1}{\sqrt{\varepsilon}} \bm{A}^{(2)}_j(\varepsilon \phi^\varepsilon) + \frac{1}{\varepsilon} \bm{A}^{(3)}_j(\varepsilon \phi^\varepsilon)$. 
Since $\bm{U}^\varepsilon$ is an exact solution to \eqref{formal-full-equ-1}, it is not difficult to get 
\begin{equation}\nonumber
\begin{aligned}
	& \frac{\partial }{\partial t}\bm{A}_0(\bm{U}^\varepsilon) + \sum_{j=1}^3 \frac{\partial }{\partial x_j} \brac{ \bm{A}_0(\bm{U}^\varepsilon) \bm{A}_j(\bm{U}^\varepsilon)} \\[2mm]
	={}&\frac{\partial }{\partial t}\bm{A}_0(\varepsilon \phi^\varepsilon) + \sum_{j=1}^3 \frac{\partial }{\partial x_j} \brac{u_j^\varepsilon\bm{A}_0(\varepsilon \phi^\varepsilon)  } + \sum_{j=1}^3 \frac{\partial }{\partial x_j} \brac{\bm{A}_0(\varepsilon \phi^\varepsilon) \brac{\bm{A}^{(1)}_j(\varepsilon \phi^\varepsilon) + \frac{1}{\sqrt{\varepsilon}} \bm{A}^{(2)}_j(\varepsilon \phi^\varepsilon) + \frac{1}{\varepsilon} \bm{A}^{(3)}_j(\varepsilon \phi^\varepsilon)} } \\[2mm]
	={}& \varepsilon \bm{A}_0'\brac{\frac{\partial }{\partial t} \phi^\varepsilon + \sum_{j=1}^3 u_j \frac{\partial }{\partial x_j} \phi^\varepsilon } + \bm{A}_0\nabla \cdot \bm{u}^\varepsilon \\[2mm]
	&\quad + \varepsilon \sum_{j=1}^3 \bigg[\bm{A}_0'\brac{\bm{A}^{(1)}_j + \frac{1}{\sqrt{\varepsilon}} \bm{A}^{(2)}_j+ \frac{1}{\varepsilon} \bm{A}^{(3)}_j} +  \bm{A}_0\brac{(\bm{A}^{(1)}_j)' + \frac{1}{\sqrt{\varepsilon}} (\bm{A}^{(2)}_j)' + \frac{1}{\varepsilon} (\bm{A}^{(3)}_j)'} \bigg] \frac{\partial \phi^\varepsilon}{\partial x_j} \\[2mm]
	={}& -\bm{A}_0' \frac{1}{q^\varepsilon}\nabla \cdot \bm{u}^\varepsilon + \bm{A}_0\nabla \cdot \bm{u}^\varepsilon 
	+ \sum_{j=1}^3 \bigg[ \bm{A}_0'\brac{\varepsilon \bm{A}^{(1)}_j + \sqrt{\varepsilon}\bm{A}^{(2)}_j+ \bm{A}^{(3)}_j} +  \bm{A}_0\brac{\varepsilon(\bm{A}^{(1)}_j)' + \sqrt{\varepsilon}(\bm{A}^{(2)}_j)' + (\bm{A}^{(3)}_j)'} \bigg] \frac{\partial \phi^\varepsilon}{\partial x_j}.
\end{aligned}
\end{equation}
Here the symbol $'$ denotes the differentiation with respect to $p^\varepsilon$. 
Thus we have
\begin{equation} \label{err-equ-4}
\begin{aligned}
	&\int  \bm{E}_{\alpha}^T \left \{ \frac{\partial }{\partial t}\bm{A}_0(\bm{U}^\varepsilon) + \sum_{j=1}^3 \frac{\partial }{\partial x_j} \brac{\bm{A}_0(\bm{U}^\varepsilon) \bm{A}_j(\bm{U}^\varepsilon)} \right \}\bm{E}_{\alpha} \diff x\\[2mm]
	\leq {}& C\brac{\abs{\nabla\cdot\bm{u}^\varepsilon} +\abs{\nabla \phi^\varepsilon} }\norm{\bm{E}}_{\abs{\alpha}}^2\\[2mm]
	\leq {}& C\brac{ \abs{\nabla\cdot (\bm{u}_\varepsilon-\bm{u}^\varepsilon)} +\abs{\nabla(\phi_\varepsilon-\phi^\varepsilon)}+ \abs{\nabla\cdot \bm{u}_\varepsilon} +\abs{\nabla \phi_\varepsilon} } \norm{\bm{E}}_{\abs{\alpha}}^2 \\[2mm]
	\leq {}& C\brac{1 + \norm{\bm{E}}_{s}}\norm{\bm{E}}_{\abs{\alpha}}^2.
\end{aligned}
\end{equation}
Here and below, C denotes a generic constant that can change from line to line.

For the other terms, we notice that 
\begin{equation}\label{err-equ-3}
\begin{aligned}
	2\int \bm{E}_{\alpha}^T \bm{A}_0(\bm{U}^\varepsilon)(\bm{F}_1^\alpha + \bm{F}_2^\alpha ) \diff x \leq {}& C\brac{ \norm{\bm{E}_{\alpha}}^2 + \norm{\bm{F}_1^\alpha}^2+ \norm{\bm{F}_2^\alpha}^2 }
\end{aligned}
\end{equation}
and estimate $\norm{\bm{F}_1^\alpha}$ and $\norm{\bm{F}_2^\alpha}$ with the help of the Moser-type inequalities in Sobolev spaces \cite{majda1984compressible}. For $\bm{F}_1^\alpha$, since
\begin{equation}\nonumber
\begin{aligned}
	\bm{A}_j(\bm{U}^\varepsilon)-\bm{A}_j(\bm{U}_\varepsilon) = (u_j^\varepsilon - u_{j\varepsilon})\bm{I}_{14} + \varepsilon (\phi^\varepsilon- \phi_\varepsilon)\brac{ (\bm{A}_j^{(1)})'(\ldots)+ \frac{1}{\sqrt{\varepsilon}} (\bm{A}_j^{(2)})'(\ldots )+\frac{1}{\varepsilon}(\bm{A}_j^{(3)})'(\ldots) }	
\end{aligned}
\end{equation}
with $(\ldots)$ being a convex combination of $\varepsilon \phi_\varepsilon$ and $\varepsilon(\phi^\varepsilon- \phi_\varepsilon)$, we use the boundedness of $\norm{\bm{U}_\varepsilon}_{s+1}$ to conclude
\begin{equation}\label{err-equ-5-1}
\begin{aligned}
	\norm{\bm{F}_1^\alpha}\leq {}&\sum_{j=1}^3 \norm{ \{ \brac{\bm{A}_j(\bm{U}^\varepsilon)-\bm{A}_j(\bm{U}_\varepsilon)} \bm{U}_{\varepsilon x_j} \}_\alpha }\\
	\leq {}& C\sum_{j=1}^3 \norm{\bm{U}_{\varepsilon x_j}}_s \norm{\bm{A}_j(\bm{U}^\varepsilon)-\bm{A}_j(\bm{U}_\varepsilon)}_{\abs{\alpha}}\\
	\leq {}& C \sum_{j=1}^3\bigg[\norm{u_{j\varepsilon}-u_j^\varepsilon }_{\abs{\alpha}} + \norm{ \phi_\varepsilon -\phi^\varepsilon}_{\abs{\alpha}}  \norm{\varepsilon (\bm{A}^{(1)}_j)'(\ldots) +\sqrt{\varepsilon} (\bm{A}^{(2)}_j)'(\ldots) + (\bm{A}^{(3)}_j)'(\ldots)}_s  \bigg]\\[2mm]
	\leq {}& C\brac{1 + \norm{\varepsilon \phi_\varepsilon + \theta \varepsilon(\phi_\varepsilon -\phi^\varepsilon) }_s^s} \norm{\bm{E}}_{\abs{\alpha}}\\[2mm]
	\leq {}& C\brac{1 +  \norm{\bm{E}}_s^s}\norm{\bm{E}}_{\abs{\alpha}}.
\end{aligned}	
\end{equation}
For $\bm{F}_2^\alpha$, we use the Moser-type calculus inequalities \cite{majda1984compressible}
\begin{equation}\nonumber
\begin{aligned}
	\norm{\bm{A}_j(\bm{U}^\varepsilon) \bm{E}_{\alpha x_j}-\left[\bm{A}_j(\bm{U}^\varepsilon) \bm{E}_{x_j}\right]_\alpha } {}&\leq C\norm{\partial \bm{A}_j}_{s-1}\norm{\bm{E}_{x_j}}_{\abs{\alpha}-1}
\end{aligned}	
\end{equation}
and
\begin{equation}\nonumber
\begin{aligned}
	\partial \bm{A}_j(\bm{U}^\varepsilon) =\partial u_j^\varepsilon \bm{I}_{14} + \varepsilon \partial \phi^\varepsilon\brac{(\bm{A}^{(1)}_j)'(\varepsilon \phi^\varepsilon) + \frac{1}{\sqrt{\varepsilon}}(\bm{A}^{(2)}_j)'(\varepsilon \phi^\varepsilon) + \frac{1}{\varepsilon} (\bm{A}^{(3)}_j)'(\varepsilon \phi^\varepsilon) }
\end{aligned}	
\end{equation}
to obtain
\begin{equation}\nonumber
\begin{aligned}
&\norm{\partial \bm{A}_j}_{s-1}\leq \norm{\partial u_j^\varepsilon}_{s-1} + C\norm{\partial \phi^\varepsilon }_{s-1}\norm{\brac{\varepsilon (\bm{A}^{(1)}_j)'(\varepsilon \phi^\varepsilon) + \sqrt{\varepsilon} (\bm{A}^{(2)}_j)'(\varepsilon \phi^\varepsilon) +  (\bm{A}^{(3)}_j)'(\varepsilon \phi^\varepsilon) }}_{s-1}.
\end{aligned}	
\end{equation}
Thus, we have
\begin{equation} \label{err-equ-5}
\norm{\bm{F}_2^\alpha} \leq C\brac{1 +\norm{\bm{E}}_{s}^{s}}\norm{\bm{E}}_{\abs{\alpha}}.
\end{equation}

Next, we follow \cite{yong1999singular} to estimate $\int  \bm{E}_{\alpha}^T \bm{A}_0(\bm{U}^\varepsilon) \bm{G}^\alpha \diff x$. Since
\begin{equation}\nonumber
\begin{aligned}
\bm{G}^\alpha = &{} \left\{ \bm{Q}(\bm{U}_\varepsilon) - \bm{Q}(\bm{U}^\varepsilon) \right\}_\alpha \\
= &{}\begin{pmatrix}
		0 \\ \bm{0}_{3\times 1} \\[2mm] 
		-\frac{1}{\varepsilon\alpha_1} \partial^\alpha \brac{\frac{\bm{\sigma}_\varepsilon}{l\sqrt{k_\varepsilon}+\nu} - \frac{\bm{\sigma}^\varepsilon}{l\sqrt{k^\varepsilon}+\nu}} \\[4mm] 
		\frac{2\beta l}{\varepsilon \alpha_2}\partial^\alpha \brac{ \frac{\sqrt{k_\varepsilon}}{(l\sqrt{k_\varepsilon}+\nu)^2}\bm{\sigma}_\varepsilon:\bm{\sigma}_\varepsilon -\frac{\sqrt{k^\varepsilon}}{(l\sqrt{k^\varepsilon}+\nu)^2}\bm{\sigma}^\varepsilon:\bm{\sigma}^\varepsilon}
		- \frac{\beta C_D}{\alpha_2l} \partial^\alpha \brac{\brac{(k_\varepsilon)^{3/2} - (k^\varepsilon)^{3/2} } }\\[3mm]
		-  \frac{1}{\varepsilon\alpha_3}\partial^\alpha \brac{\frac{\bm{y}_\varepsilon}{l\sqrt{k_\varepsilon}+\nu}- \frac{\bm{y}^\varepsilon }{l\sqrt{k^\varepsilon}+\nu} }
	\end{pmatrix}	
\end{aligned}	
\end{equation}
has three nonzero parts
and 
\begin{equation}\nonumber
	\bm{A}_0(\bm{U}^\varepsilon) = \diag \{ \frac{q^\varepsilon}{\rho^\varepsilon}, ~\bm{I}_3, ~ \alpha_1 \bm{\bar{A}}_0, ~\alpha_2, ~\frac{1}{\alpha_3}\bm{I}_3 \}, \quad \bm{\bar{A}}_0 = \diag\{1, ~ 2, ~ 2, ~1, ~2, ~ 1\},
\end{equation}
we write
\begin{equation}\nonumber
\begin{aligned}
	\int \bm{E}_{\alpha}^T \bm{A}_0(\bm{U}^\varepsilon) \bm{G}^\alpha \diff x ~\dot{=} ~ N_1 + N_2 + N_3.
\end{aligned}
\end{equation}
For $N_1$, we use the Moser-type inequalities, the positiveness of diagonal matrix $\frac{\bm{\bar{A}}_0}{l\sqrt{k}+\nu}$, and the definition  $k_\varepsilon =k$ and $\bm{\sigma}_\varepsilon =\sqrt{\varepsilon}O(1)$ in \eqref{equ:defin-u_epsilon} to deduce that
\begin{equation}\nonumber
\begin{aligned}
	N_1 ={}& -\frac{1}{\varepsilon} \int \partial^\alpha \brac{\bm{\sigma}_\varepsilon- \bm{\sigma}^\varepsilon }^T \bm{\bar{A}}_0 \partial^\alpha \brac{\frac{\bm{\sigma}_\varepsilon}{l\sqrt{k_\varepsilon}+\nu} - \frac{\bm{\sigma}^\varepsilon}{l\sqrt{k^\varepsilon}+\nu}}\diff x\\[4mm]
	={}& -\frac{1}{\varepsilon}\int \partial^\alpha\brac{\bm{\sigma}_\varepsilon- \bm{\sigma}^\varepsilon}^T \bm{\bar{A}}_0 \partial^\alpha \brac{\frac{\bm{\sigma}_\varepsilon-\bm{\sigma}^\varepsilon}{l\sqrt{k_\varepsilon}+\nu} }\diff x
	+\frac{1}{\varepsilon} \int \partial^\alpha \brac{\bm{\sigma}_\varepsilon- \bm{\sigma}^\varepsilon}^T \bm{\bar{A}}_0 \partial^\alpha \brac{\frac{\bm{\sigma}^\varepsilon}{l\sqrt{k^\varepsilon}+\nu} - \frac{\bm{\sigma}^\varepsilon}{l\sqrt{k_\varepsilon}+\nu}}\diff x\\[4mm]
	={}& - \frac{1}{\varepsilon}\int \partial^\alpha\brac{\bm{\sigma}_\varepsilon- \bm{\sigma}^\varepsilon}^T\frac{\bm{\bar{A}}_0}{l\sqrt{k_\varepsilon}+\nu}\partial^\alpha\brac{\bm{\sigma}_\varepsilon- \bm{\sigma}^\varepsilon}\diff x - \frac{1}{\varepsilon}\int \partial^\alpha \brac{\bm{\sigma}_\varepsilon- \bm{\sigma}^\varepsilon}^T\bm{\bar{A}}_0\sum_{0<\beta\leq \alpha} \partial^\beta \frac{1}{l\sqrt{k_\varepsilon}+\nu}\partial^{\alpha-\beta} (\bm{\sigma}_\varepsilon- \bm{\sigma}^\varepsilon)\diff x\\ 
	{}& +\frac{1}{\varepsilon}\int \partial^\alpha\brac{\bm{\sigma}_\varepsilon- \bm{\sigma}^\varepsilon}^T\bm{\bar{A}}_0\partial^\alpha (\frac{\bm{\sigma}^\varepsilon }{l\sqrt{k^\varepsilon}+\nu} - \frac{\bm{\sigma}^\varepsilon }{l\sqrt{k_\varepsilon}+\nu}) \diff x\\[4mm]
	\leq {}&- \frac{5\delta_1}{\varepsilon} \norm{\partial^\alpha( \bm{\sigma}_\varepsilon - \bm{\sigma}^\varepsilon)}^2 + \frac{C}{\varepsilon}\norm{ \partial^\alpha (\bm{\sigma}_\varepsilon - \bm{\sigma}^\varepsilon )}\norm{\frac{1}{l\sqrt{k_\varepsilon}+\nu}}_{s}\norm{\bm{\sigma}_\varepsilon- \bm{\sigma}^\varepsilon}_{\abs{\alpha}-1}  \\
	&+\frac{C}{\varepsilon}\norm{ \partial^\alpha (\bm{\sigma}_\varepsilon - \bm{\sigma}^\varepsilon )}\norm{\bm{\sigma}^\varepsilon }_s(1+\norm{k_\varepsilon -k^\varepsilon}_s^s)\norm{ k_\varepsilon -k^\varepsilon}_{\abs{\alpha}} \\[4mm]
	\leq {}&- \frac{5\delta_1}{\varepsilon} \norm{\partial^\alpha( \bm{\sigma}_\varepsilon - \bm{\sigma}^\varepsilon)}^2 + \frac{C}{\varepsilon}\norm{ \partial^\alpha (\bm{\sigma}_\varepsilon - \bm{\sigma}^\varepsilon )}\norm{\bm{\sigma}_\varepsilon- \bm{\sigma}^\varepsilon}_{\abs{\alpha}-1}  \\
	&+\frac{C}{\varepsilon}\norm{ \partial^\alpha (\bm{\sigma}_\varepsilon - \bm{\sigma}^\varepsilon )}\brac{\norm{\bm{\sigma}_\varepsilon}_s + \norm{\bm{\sigma}_\varepsilon -\bm{\sigma}^\varepsilon }_s }  (1+\norm{k_\varepsilon -k^\varepsilon}_s^s)\norm{ k_\varepsilon -k^\varepsilon}_{\abs{\alpha}} \\[4mm]
	\leq {}&- \frac{5\delta_1}{\varepsilon} \norm{\partial^\alpha( \bm{\sigma}_\varepsilon - \bm{\sigma}^\varepsilon)}^2 + \frac{C}{\varepsilon}\norm{ \partial^\alpha (\bm{\sigma}_\varepsilon - \bm{\sigma}^\varepsilon )}\norm{\bm{\sigma}_\varepsilon- \bm{\sigma}^\varepsilon}_{\abs{\alpha}-1}  \\
	&+\frac{C}{\varepsilon}\norm{ \partial^\alpha (\bm{\sigma}_\varepsilon - \bm{\sigma}^\varepsilon )}\brac{\sqrt{\varepsilon} + \norm{\bm{\sigma}_\varepsilon -\bm{\sigma}^\varepsilon }_s }  (1+\norm{k_\varepsilon -k^\varepsilon}_s^s)\norm{ k_\varepsilon -k^\varepsilon}_{\abs{\alpha}} \\[4mm]
	\leq {}&- \frac{3\delta_1}{\varepsilon} \norm{\partial^\alpha( \bm{\sigma}_\varepsilon - \bm{\sigma}^\varepsilon)}^2 + \frac{C}{\varepsilon} \norm{ \bm{\sigma}_\varepsilon - \bm{\sigma}^\varepsilon}_{\abs{\alpha}-1}^2  + C(1+\Delta(t)^2) (1 + \norm{\bm{E}}_s^{2s}) \norm{\bm{E}}_{\abs{\alpha}}^2.
\end{aligned}
\end{equation}
Here $\delta_1>0$ is the smallest eigenvalue of the positive definite matrix $\frac{\bm{\bar{A}}_0}{l\sqrt{k}+\nu}$ and
\begin{equation}\nonumber
\begin{aligned}
	\Delta(t) =  \frac{\norm{\bm{E}(t)}_{s} }{\sqrt{\varepsilon}}.
\end{aligned}	
\end{equation}
Similarly, we have
\begin{equation}\nonumber
\begin{aligned}
	N_3  ={}& -\frac{1}{\varepsilon\alpha_3^2} \int \partial^\alpha\brac{ \bm{y}_\varepsilon- \bm{y}^\varepsilon }^T \partial^\alpha \brac{\frac{\bm{y}_\varepsilon}{l\sqrt{k_\varepsilon}+\nu}- \frac{\bm{y}^\varepsilon }{l\sqrt{k^\varepsilon}+\nu}} \diff x\\[4mm]
	\leq {}&- \frac{3\delta_2}{\varepsilon} \norm{\partial^\alpha(\bm{y}_\varepsilon - \bm{y}^\varepsilon)}^2 + \frac{C}{\varepsilon} \norm{\bm{y}_\varepsilon - \bm{y}^\varepsilon}_{\abs{\alpha}-1}^2  + C(1+\Delta(t)^2)(1 + \norm{\bm{E}}_s^{2s}) \norm{\bm{E}}_{\abs{\alpha}}^2. 
\end{aligned}
\end{equation}
For $N_2$, we set
\begin{equation}\nonumber
\begin{aligned}
	F ~\dot{=}~ F(k, \bm{\sigma})={} \frac{2 \beta l\sqrt{k}}{(l\sqrt{k}+\nu)^2} \bm{\sigma} : \bm{\sigma} - \frac{\beta C_D}{l} k^{3/2},\\[2mm]
	k(\theta)=k_\varepsilon + \theta(k_\varepsilon - k^\varepsilon), \quad 
	\bm{\sigma}(\theta) = \bm{\sigma}_\varepsilon + \theta\brac{\bm{\sigma}_\varepsilon -\bm{\sigma}^\varepsilon}.
\end{aligned}	
\end{equation}
Then we deduce from 
\begin{equation}\nonumber
\begin{aligned}
	F(k_\varepsilon, \frac{\bm{\sigma}_\varepsilon}{\sqrt{\varepsilon}} ) - F(k^\varepsilon, \frac{\bm{\sigma}^\varepsilon}{\sqrt{\varepsilon}})
	= (k_\varepsilon - k^\varepsilon) \int_0^1 \partial_k F(k(\theta), \frac{\bm{\sigma}(\theta)}{\sqrt{\varepsilon}}) \diff \theta + \brac{\frac{\bm{\sigma}_\varepsilon}{\sqrt{\varepsilon}} -\frac{\bm{\sigma}^\varepsilon}{\sqrt{\varepsilon}}}^T\int_0^1 \partial_{\bm{\sigma}} F(k(\theta), \frac{\bm{\sigma}^\varepsilon}{\sqrt{\varepsilon}}) \diff \theta
\end{aligned}	
\end{equation} 
that
\begin{equation}\nonumber
\begin{aligned}
	N_2 ={}&\int  \partial^\alpha(k_\varepsilon - k^\varepsilon)\partial^\alpha \brac{F(k_\varepsilon, \frac{\bm{\sigma}_\varepsilon}{\sqrt{\varepsilon}} ) - F(k^\varepsilon, \frac{\bm{\sigma}^\varepsilon}{\sqrt{\varepsilon}})} \diff x\\[4mm]
	\leq {}& \norm{\partial^\alpha (k_\varepsilon - k^\varepsilon)}\norm{F(k_\varepsilon, \frac{\bm{\sigma}_\varepsilon}{\sqrt{\varepsilon}} ) - F(k^\varepsilon, \frac{\bm{\sigma}^\varepsilon}{\sqrt{\varepsilon}})}_{\abs{\alpha}} \\[4mm]
	\leq {}& C \norm{\partial^\alpha (k_\varepsilon - k^\varepsilon)}\brac{\norm{k_\varepsilon -k^\varepsilon}_{\abs{\alpha}}\norm{\partial_k F(k(\theta), \frac{\bm{\sigma}^\varepsilon}{\sqrt{\varepsilon}}) }_{s} + \norm{\frac{\bm{\sigma}_\varepsilon -\bm{\sigma}^\varepsilon }{\sqrt{\varepsilon}} }_{\abs{\alpha}}\norm{\partial_{\bm{\sigma}}F(k(\theta), \frac{\bm{\sigma}^\varepsilon}{\sqrt{\varepsilon}}) }_{s} } \\[4mm]
	\leq {}& C\norm{\partial^\alpha (k_\varepsilon - k^\varepsilon)}\brac{1 + \norm{k_\varepsilon -k^\varepsilon}_s^s + \norm{\frac{\bm{\sigma}_\varepsilon - \bm{\sigma}^\varepsilon}{\sqrt{\varepsilon}}}_s^s} \brac{\norm{k^\varepsilon -k_\varepsilon}_{\abs{\alpha}}+\norm{\frac{\bm{\sigma}_\varepsilon -\bm{\sigma}^\varepsilon }{\sqrt{\varepsilon}} }_{\abs{\alpha}} }  \\[4mm]
	\leq {}&C\brac{1 + \Delta(t)^s} \brac{ \norm{k_\varepsilon -k^\varepsilon }_{\abs{\alpha}}^2 + \norm{k^\varepsilon -k_\varepsilon}_{\abs{\alpha}}\norm{\frac{\bm{\sigma}_\varepsilon -\bm{\sigma}^\varepsilon }{\sqrt{\varepsilon}} }_{\abs{\alpha}} }.
\end{aligned}
\end{equation}
Consequently, we have 
\begin{equation}\label{equ:err-N123}
\begin{aligned}
	\int \bm{E}_{\alpha}^T \bm{A}_0(\bm{U}^\varepsilon) \bm{G}^\alpha \diff x \leq{}& - \frac{3\delta_1}{\varepsilon} \norm{\partial^\alpha( \bm{\sigma}_\varepsilon - \bm{\sigma}^\varepsilon)}^2  - \frac{3\delta_2}{\varepsilon} \norm{\partial^\alpha(\bm{y}_\varepsilon - \bm{y}^\varepsilon)}^2 \\[3mm]
	&+ \frac{C}{\varepsilon} \norm{ \bm{\sigma}_\varepsilon - \bm{\sigma}^\varepsilon}_{\abs{\alpha}-1}^2 + \frac{C}{\varepsilon} \norm{\bm{y}_\varepsilon - \bm{y}^\varepsilon}_{\abs{\alpha}-1}^2 \\[3mm]
	&+C\big(1 + \Delta(t)^s\big) \norm{\bm{E}}_{\abs{\alpha}}\norm{\frac{\bm{\sigma}_\varepsilon -\bm{\sigma}^\varepsilon }{\sqrt{\varepsilon}} }_{\abs{\alpha}} \\[3mm]
	&  + C(1+\Delta(t)^{s}) (1 + \norm{\bm{E}}_s^{2s}) \norm{\bm{E}}_{\abs{\alpha}}^2. 
\end{aligned}
\end{equation}

For the remaining term  $\int \bm{E}_{\alpha}^T \bm{A}_0(\bm{U}^\varepsilon)\bm{R}_\alpha \diff x$, we firstly deduce from the regularity of $(\bm{u}, ~\pi, ~k)$ that
\begin{equation}\nonumber
	1-\frac{\rho_0}{\rho_\varepsilon}=1-\frac{\rho(p_0)}{\rho(p_0 + \varepsilon^2 \pi)}=O(\varepsilon^2).
\end{equation}
Thus, it follows immediately from the expressions of $\bm{R}$ in \eqref{equ:4-4-fullequ_var} that 
\begin{equation}\nonumber
\begin{aligned}
	&\underset{t\in[0, T_\star]}{\max} \norm{f_i(\cdot, t)}_s \leq C \varepsilon, \qquad i=1,2,4,\\
	&\underset{t\in[0, T_\star]}{\max} \norm{f_i(\cdot, t)}_s \leq C \sqrt{\varepsilon} , \qquad i=3, 5,
\end{aligned}
\end{equation}
and
\begin{equation}\label{err-equ-3-2}
\begin{aligned}
	&\int \bm{E}_{\alpha}^T \bm{A}_0(\bm{U}^\varepsilon)\bm{R}_\alpha \diff x\\
	={} &\int  \frac{q^\varepsilon}{\rho^\varepsilon}\brac{\partial^\alpha\phi_\varepsilon-\partial^\alpha\phi^\varepsilon}\partial^\alpha f_1 \diff x + \int \brac{\partial^\alpha\bm{u}_\varepsilon-\partial^\alpha\bm{u}^\varepsilon}^T\partial^\alpha f_2  \diff x + \int \alpha_1  \brac{\partial^\alpha\bm{\sigma}_\varepsilon-\partial^\alpha\bm{\sigma}^\varepsilon}^T \bm{\bar{A}}_0\partial^\alpha f_3  \diff x\\[2mm]
	{}&+\int  \alpha_2 \brac{\partial^\alpha k_\varepsilon-\partial^\alpha k^\varepsilon}\partial^\alpha f_4  \diff x + \int \frac{1}{\alpha_3}\brac{\partial^\alpha\bm{y}_\varepsilon- \partial^\alpha\bm{y}^\varepsilon}^T\partial^\alpha f_5 \diff x\\[2mm]
	\leq {}& C \brac{\norm{\bm{E}_\alpha}^2 + \abs{\partial^\alpha f_1}^2  + \abs{\partial^\alpha f_2}^2 + \abs{\partial^\alpha f_4}^2 }\\[2mm]
	{}&+C \varepsilon\brac{\abs{\partial^\alpha f_3}^2+ \abs{\partial^\alpha f_5}^2} + \frac{\delta_1}{\varepsilon} \norm{\partial^\alpha( \bm{\sigma}_\varepsilon - \bm{\sigma}^\varepsilon)}^2 + \frac{\delta_2}{\varepsilon} \norm{\partial^\alpha( \bm{y}_\varepsilon - \bm{y}^\varepsilon)}^2\\[2mm]
	\leq {}& C \norm{\bm{E}_\alpha}^2  + \frac{\delta_1}{\varepsilon} \norm{\partial^\alpha( \bm{\sigma}_\varepsilon - \bm{\sigma}^\varepsilon)}^2 + \frac{\delta_2}{\varepsilon} \norm{\partial^\alpha( \bm{y}_\varepsilon - \bm{y}^\varepsilon)}^2 + C \varepsilon^2.
\end{aligned}
\end{equation}

Substituting \eqref{err-equ-4}--\eqref{err-equ-3-2} into \eqref{err-equ-2} yields
\begin{equation}\label{equ:err-equ-7}
\begin{aligned}
	&\frac{\diff}{\diff t} \int \bm{E}_{\alpha}^T\bm{A}_0(\bm{U}^\varepsilon)\bm{E}_{\alpha} \diff x + \frac{2\delta_1}{\varepsilon} \norm{\partial^\alpha( \bm{\sigma}_\varepsilon - \bm{\sigma}^\varepsilon)}^2  + \frac{2\delta_2}{\varepsilon} \norm{\partial^\alpha(\bm{y}_\varepsilon - \bm{y}^\varepsilon)}^2\\[4mm]
	 \leq {}& \frac{C}{\varepsilon} \norm{ \bm{\sigma}_\varepsilon - \bm{\sigma}^\varepsilon}_{\abs{\alpha}-1}^2 + \frac{C}{\varepsilon} \norm{\bm{y}_\varepsilon - \bm{y}^\varepsilon}_{\abs{\alpha}-1}^2 +C\big(1 + \Delta(t)^s\big) \norm{\bm{E}}_{\abs{\alpha}}\norm{\frac{\bm{\sigma}_\varepsilon -\bm{\sigma}^\varepsilon }{\sqrt{\varepsilon}} }_{\abs{\alpha}} \\[4mm]
	&  + C(1+\Delta(t)^{s}) (1 + \norm{\bm{E}}_s^{2s}) \norm{\bm{E}}_{\abs{\alpha}}^2 + C\varepsilon^2.
\end{aligned}
\end{equation}
Let $k$ be an integer such that $0\leq k\leq s$. We sum up the last inequality for all $\alpha$ with  $\abs{\alpha} \leq k$ and integrate \eqref{equ:err-equ-7} from 0 to $T$  to obtain
\begin{equation}\label{err:last-err-1}
\begin{aligned}
	&\sum_{\abs{\alpha}\leq k}\int  \bm{E}_{\alpha}^T\bm{A}_0(\bm{U}^\varepsilon)\bm{E}_{\alpha} \big|_{t=T} \diff x + \frac{2\delta_1}{\varepsilon} \int_0^T  \norm{ \bm{\sigma}_\varepsilon - \bm{\sigma}^\varepsilon}_k^2 \diff t + \frac{2\delta_2}{\varepsilon}\int_0^T \norm{\bm{y}_\varepsilon - \bm{y}^\varepsilon}_k^2 \diff t\\[4mm]
	\leq{}& \sum_{\abs{\alpha}\leq k} \int \bm{E}_{\alpha}^T\bm{A}_0(\bm{U}^\varepsilon)\bm{E}_{\alpha} \big|_{t=0}  \diff x +\frac{C}{\varepsilon} \int_0^T \norm{ \bm{\sigma}_\varepsilon - \bm{\sigma}^\varepsilon}_{k-1}^2\diff t + \frac{C}{\varepsilon}\int_0^T \norm{\bm{y}_\varepsilon - \bm{y}^\varepsilon}_{k-1}^2\diff t\\[4mm]
	&+C\int_0^T \big(1 + \Delta(t)^s\big) \norm{\bm{E}}_{k}\norm{\frac{\bm{\sigma}_\varepsilon -\bm{\sigma}^\varepsilon }{\sqrt{\varepsilon}} }_{k} \diff t +C \int_0^T(1+\Delta(t)^{3s})\norm{\bm{E}(t)}_{k}^2 \diff t +CT  \varepsilon^2 \\[4mm]
	\leq{} & \sum_{\abs{\alpha}\leq k} \int \bm{E}_{\alpha}^T\bm{A}_0(\bm{U}^\varepsilon)\bm{E}_{\alpha} \big|_{t=0}  \diff x +\frac{C}{\varepsilon} \int_0^T \norm{ \bm{\sigma}_\varepsilon - \bm{\sigma}^\varepsilon}_{k-1}^2\diff t + \frac{C}{\varepsilon}\int_0^T \norm{\bm{y}_\varepsilon - \bm{y}^\varepsilon}_{k-1}^2\diff t\\[4mm]
	&+\frac{\delta_1}{\varepsilon} \int_0^T  \norm{ \bm{\sigma}_\varepsilon - \bm{\sigma}^\varepsilon}_k^2 \diff t +C \int_0^T(1+\Delta(t)^{3s})\norm{\bm{E}(t)}_{k}^2 \diff t +CT_\star\varepsilon^2 .
\end{aligned}
\end{equation}
Thus we have
\begin{equation}\nonumber
\begin{aligned}
	&\frac{\delta_1}{\varepsilon} \int_0^T  \norm{ \bm{\sigma}_\varepsilon - \bm{\sigma}^\varepsilon}_k^2 \diff t + \frac{2\delta_2}{\varepsilon}\int_0^T \norm{\bm{y}_\varepsilon - \bm{y}^\varepsilon}_k^2 \diff t\\[4mm]	
	\leq{} & \sum_{\abs{\alpha}\leq k} \int \bm{E}_{\alpha}^T\bm{A}_0(\bm{U}^\varepsilon)\bm{E}_{\alpha} \big|_{t=0}  \diff x +\frac{C}{\varepsilon} \int_0^T \norm{ \bm{\sigma}_\varepsilon - \bm{\sigma}^\varepsilon}_{k-1}^2\diff t + \frac{C}{\varepsilon}\int_0^T \norm{\bm{y}_\varepsilon - \bm{y}^\varepsilon}_{k-1}^2\diff t\\[4mm]
	& +C \int_0^T(1+\Delta(t)^{3s})\norm{\bm{E}(t)}_{k}^2 \diff t +CT_\star\varepsilon^2 .
\end{aligned}
\end{equation}
A simple iteration based on this inequality and $\norm{\cdot}_{-1}=0$ yields
\begin{equation}\nonumber
\begin{aligned}
	&\frac{1}{\varepsilon} \int_0^T  \norm{ \bm{\sigma}_\varepsilon - \bm{\sigma}^\varepsilon}_k^2 \diff t + \frac{1}{\varepsilon}\int_0^T \norm{\bm{y}_\varepsilon - \bm{y}^\varepsilon}_k^2 \diff t\\[4mm]	
	\leq{} & C\sum_{\abs{\alpha}\leq k} \int \bm{E}_{\alpha}^T\bm{A}_0(\bm{U}^\varepsilon)\bm{E}_{\alpha} |_{t=0}  \diff x   +C \int_0^T(1+\Delta(t)^{3s})\norm{\bm{E}(t)}_{k}^2 \diff t +CT_\star\varepsilon^2 .
\end{aligned}
\end{equation}

From the last inequality and \eqref{err:last-err-1} with $k=s$, it follows that
\begin{equation}\nonumber
\begin{aligned}
	&\sum_{\abs{\alpha}\leq s}\int  \bm{E}_{\alpha}^T\bm{A}_0(\bm{U}^\varepsilon)\bm{E}_{\alpha}\big|_{t=T} \diff x \\[4mm]	
	\leq{} &C \sum_{\abs{\alpha}\leq s} \int \bm{E}_{\alpha}^T\bm{A}_0(\bm{U}^\varepsilon)\bm{E}_{\alpha} \big|_{t=0}  \diff x  +C \int_0^T(1+\Delta(t)^{3s})\norm{\bm{E}(t)}_{s}^2 \diff t +CT_\star\varepsilon^2.
\end{aligned}
\end{equation}
Note that $C^{-1} \norm{\bm{E}_{\alpha}} \leq \int \bm{E}_{\alpha}^T\bm{A}_0(\bm{U}^\varepsilon)\bm{E}_{\alpha}\diff x \leq C \norm{\bm{E}_{\alpha}}$ and $\norm{\bm{E}(0)}_s^2 =O(\varepsilon^2)$. We get
\begin{equation}\label{equ:err-equ-8}
	\norm{\bm{E}(T)}_s^2 \leq  C \int_0^T (1+\Delta(t)^{3s}) \norm{\bm{E}(t)}_s^2 \diff t + CT_\star \varepsilon^2.
\end{equation}
Applying Gronwall's lemma to \eqref{equ:err-equ-8} gives
\begin{equation}\label{equ:err-equ-9}
	\norm{\bm{E}(T)}_s^2 \leq CT_\star \varepsilon^2 \exp \brac{C \int_0^T(1+\Delta(t)^{3s}) \diff t } \equiv \varepsilon
	\Phi(T).
\end{equation}
Thus, we have
\begin{equation}\nonumber
	\Phi^{\prime}(t)=C (1+\Delta(t)^{3s}) \Phi(t) \leq C \Phi(t)+C \Phi^{1+3s/2}(t).
\end{equation}
Applying the nonlinear Gronwall-type inequality in \cite{yong1999singular} to this inequality yields
\begin{equation}\nonumber
	\Delta(t)^2 \leq  \Phi(t) \leq \exp \left(C T_\star\right)
\end{equation}
if $\Phi(0)=CT_\star \varepsilon < \exp (-C T_*)$. Because of \eqref{equ:err-equ-9}, there exists a constant $K$, independent of $\varepsilon$, such that
\begin{equation}\nonumber
	\norm{\bm{E}(t)}_s \leq K \varepsilon.
\end{equation}
This completes the proof.
\end{proof}

\section*{Acknowledgements}
Research is supported by National Key Research and Development Program of China (Grant no. 2021YFA0719200).


\end{CJK}
\end{document}